\pgfplotsset{compat=1.15}
\definecolor{greenbean}{RGB}{199,237,204}
\newsavebox{\measure@tikzpicture}
	\def\tikz@width{#1}%
\newtheorem{thm}{Theorem}[section]
\newtheorem{lemma}[thm]{Lemma}
\newtheorem{prop}[thm]{Proposition}
\newtheorem{cor}[thm]{Corollary}
\newtheorem{notation}{Notation}[]
\theoremstyle{definition}
\newtheorem{Def}[thm]{Definition}
\newtheorem{rmk}[thm]{Remark}
\newcommand{\CC}{\mathbb{C}}
\newcommand{\ZZ}{\mathbb{Z}}
\newcommand{\PP}{\mathbb{P}}
\newcommand{\mcC}{\mathcal{C}}
\newcommand{\mcB}{\mathcal{B}}
\newcommand{\mcO}{\mathcal{O}}
\newcommand{\mcQ}{\mathcal{Q}}
\newcommand{\Kbar}{\overline{K}}
\newcommand{\fd}{\mathfrak{d}}
\newcommand{\sr}{\mathrm{sr}}
\newcommand{\red}{\mathrm{r}}
\newcommand{\I}{\mathop{\mathrm{I}}\nolimits}
\newcommand{\Comb}{\mathop{\mathrm{Comb}}\nolimits}
\newcommand{\ord}{\mathop{\mathrm{ord}}\nolimits}
\newcommand{\Supp}{\mathop{\mathrm{Supp}}\nolimits}
\newcommand{\Div}{\mathop{\mathrm{Div}}\nolimits}
\newcommand{\pic}{\mathop{\mathrm{Pic}}\nolimits}
\newcommand{\MW}{\mathop{\mathrm{MW}}\nolimits}
\newcommand{\Contr}{\mathop{\mathrm{Contr}}\nolimits}
\def\cpt{\mathbb{CP}^2}
\newcommand{\pcpt}[1]{{\pi_1(\mathbb{CP}^2 \setminus #1, *)}}
\DeclarePairedDelimiterX\setc[2]{\{}{\}}{\,#1 \;\delimsize\vert\; #2\,}
\newcommand{\ug}[1]{\Gamma_#1}
\DeclareMathOperator{\pr}{pr}
\newcommand{\uGammaSq}[2]{\begin{equation}\label{#1}
		\ug{#2}^2\uGammaSqChecknextarg}
	\newcommand{\uGammaSqChecknextarg}{\@ifnextchar\bgroup{\uGammaSqGobblenextarg}{ = e,
\end{equation} }}
\newcommand{\uGammaSqGobblenextarg}[1]{ = \ug{#1}^2\@ifnextchar\bgroup{\uGammaSqGobblenextarg}{  = e,
\end{equation}}}
\newcommand{\ubegineq}[1]{\begin{equation}\label{#1}}
\newcommand{\uendeq}{\end{equation}}
\newcommand{\uProjRel}[2]{
\begin{equation}\label{#1}
	\ug{#2'}\ug{#2}\uProjRelChecknextarg}
\newcommand{\uProjRelChecknextarg}{\@ifnextchar\bgroup{\uProjRelGobblenextarg}{ = e
\end{equation}}}
\newcommand{\uProjRelGobblenextarg}[1]{\ug{#1'}\ug{#1}\@ifnextchar\bgroup{\uProjRelGobblenextarg}{  = e. \end{equation}}}
\newcommand{\mbb}[1]{\mathbb{#1}}
\newcommand{\notinsubfile}[1]{}
\begin{document}

\title{
The realization space of a certain conic line arrangement of degree 7 and a $\pi_1$-equivalent Zariski pair
\footnotetext{Email addresses.\\ Meirav Amram: meiravt@sce.ac.il; Shinzo Bannai: bannai@ous.ac.jp;  Taketo Shirane: shirane@tokushima-u.ac.jp; Uriel Sinichkin: sinichkin@mail.tau.ac.il;  Hiro-o Tokunaga: tokunaga@tmu.ac.jp\\

 2020 Mathematics Subject Classification. 14H30, 14H50, 14Q05, 20F55. \\{\bf Key words}: Zariski pairs, conic-line arrangements, fundamental group, Coxeter groups.}}
\author[1]{Meirav Amram}
\author[2]{Shinzo Bannai}
\author[3]{Taketo Shirane}
\author[4]{Uriel Sinichkin}
\author[5]{Hiro-o Tokunaga}

\affil[1]{\small{Department of Mathematics, Shamoon College of Engineering, Ashdod, Israel; ORCID  ID: 0000-0003-4912-4672}}
\affil[2]{\small{Department of Applied Mathematics, Faculty of Science, Okayama University of Science, Japan}}
\affil[3]{\small{Department of Mathematical Science, Faculty of Science and Technology, Tokushima University, Japan}}
\affil[4]{\small{School of Mathematical Sciences, Tel Aviv University, Tel Aviv, Israel}}
\affil[5]{\small{Department of Mathematical Sciences, Tokyo Metropolitan University, Japan}}

\maketitle

\abstract{In this paper, we continue the study of the embedded topology of plane algebraic curves. We study the realization space of conic line arrangements of degree $7$ with certain fixed combinatorics and determine the number of connected components. This is done by showing the existence of a Zariski pair having these combinatorics, which we identified as a $\pi_1$-equivalent Zariski pair.  


\section{Introduction}

In this paper, we continue the study of the embedded topology of plane (algebraic) curves.
Here the {\it embedded topology} of a plane curve $\mcC\subset\PP^2$ is the homeomorphism class of the pair $(\PP^2,\mcC)$ of the complex projective plane $\PP^2$ and the reduced divisor $\mcC$ on $\PP^2$.
It is known that the combinatorial type of $\mcC$  determines the embedded topology of $\mcC$ in its tubular neighborhood, but does not determine the embedded topology in $\PP^2$.   
Here, the  {\it combinatorial type} ({\it combinatorics} for short) of a plane curve, is the  data determined by the number of irreducible components, the degrees and singularities of components, and the intersections of components of $\mcC$.
A pair $(\mcC_1,\mcC_2)$ of plane curves $\mcC_1,\mcC_2\subset\PP^2$ is called a {\it Zariski pair} if $\mcC_1 $ and $\mcC_2$ have the same combinatorics but different embedded topology in $\PP^2$ (See \cite{bartolo94, survey} for a precise definition of Zariski pairs.).  The study of Zariski pairs can be divided into  the following two main steps:
\begin{enumerate}
\item Construct and study curves having a given fixed combinatorial type. 
\item Find an appropriate invariant to distinguish the embedded topology of the curves.
\end{enumerate}
In this paper, we address both steps for certain conic-line arrangements.

Concerning the first step, the second and fifth named authors, with their collaborators, used rational elliptic surfaces (for example: \cite{tokunaga2014}, \cite{bannai-tokunaga2015}, \cite{bannai2016}, \cite{bannai_tokunaga2019zariski})  to construct Zariski pairs of reducible curves whose irreducible components have small degree. 
Recently, their method was simplified by applying Mumford representations and Gr\"obner bases \cite{takahashi_tokunaga2020explicit}, \cite{takahashi_tokunaga2021}.
The above works were mostly restricted to constructing a few examples of curves with the given combinatorics. However, in this paper, we apply their methods in a more general form to conic-line arrangements of a specific fixed combinatorial type, then study the realization space of such arrangements. Here, the realization space of curves of degree $d$ with fixed combinatorics means the quasi-projective variety in $\PP {\rm H}^0(\PP^2,\mathcal{O}(d))$, consisting of closed points corresponding to such curves.

For the second step, the fundamental group $\pi_1(\PP^2\setminus\mcC)$ of the complement of a plane curve $\mcC\subset\PP^2$ has been  used to study the embedded topology of plane curves, from the initial work of Zariski \cite{zariski29}.
However, there exist Zariski pairs $(\mcC_1,\mcC_2)$ with $\pi_1(\PP^2\setminus\mcC_1)\cong\pi_1(\PP^2\setminus\mcC_2)$, which are called {\it $\pi_1$-equivalent Zariski pairs}.
For example, there are $\pi_1$-equivalent Zariski pairs of sextics with simple singularities in the list of \cite{shimada2009} (see \cite[Remark~5.9]{artal-carmona-cogolludo2002}). 
The plane curves consisting of one smooth cubic and one smooth curve of degree $d\geq 4$ studied by Shimada in \cite{shimada2003} were shown to be $\pi_1$-equivalent by the third named author in \cite{shirane2017}. 
Artal arrangements consisting of one smooth curve of degree $d\geq 4$ and three non-concurrent lines also produce many $\pi_1$-equivalent Zariski pairs (see \cite{artal-cogolludo-martin2020} and \cite{shirane2019}).
The above known $\pi_1$-equivalent Zariski pairs are given by curves containing a component with either singularities or genus $\geq 1$.
One of the goals of this paper is to give a $\pi_1$-equivalent Zariski pair of conic-line arrangements (i.e., plane curves consisting of only smooth rational curves). The fundamental groups of the conic-line arrangements of degree $7$ and $8$ given in \cite{tokunaga2014}, and an additional new example, have been calculated in \cite{robert}, but these were not $\pi_1$-equivalent. Also, for the conic-line arrangements distinguished by the existence or non-existence of certain dihedral covers such as given in \cite{bannai-tokunaga2015},  the fundamental groups  were not completely calculated but the existence or non-existence imply that they are not $\pi_1$-equivalent. One method to distinguish the topology of reducible curves $\mcC$, which works also for some $\pi_1$-equivalent cases, is to consider irreducible components $C\subset \mcC$ and the torsion classes of $\pic^0(C)$ derived from $\mcC$ (cf. \cite{shimada2003}, \cite{shirane2019}).
However, because $\pic^0(\PP^1)=0$, this  approach cannot be taken to give a Zariski pair for the  conic-line arrangement case. 
Hence, an example of a $\pi_1$-equivalent Zariski pair of conic-line arrangements is of interest.

The conic-line arrangements that we consider in this paper will have the following combinatorial type, denoted by $\Comb(\mcC)$, as depicted in Figure \ref{fig:comb}:  
\begin{enumerate}
    \item The arrangement consists of three smooth conics $C_1, C_2, C_3$ and a line $L$.
    \item $C_1$ and $C_2$ intersect transversally at $4$ distinct points $\{p_1, \ldots, p_4\}$.
    \item $C_3$ passes through two of the points $\{p_1, \ldots, p_4\}$ and is tangent to both $C_1$ and $C_2$ at points distinct from $\{p_1, \ldots, p_4\}$. We call such $C_3$ a {\it weak contact conic} of $C_1+C_2$. 
    \item $L$ is a bitangent line of $C_1+C_2$ (i.e., a line tangent to both $C_1$ and $C_2$),  and intersects $C_3$ transversally.  
\end{enumerate}
\begin{figure}
\centering
\begin{tikzpicture}[scale=1.3]
\draw[ultra thick] plot [domain=-2/sqrt(3):2*sqrt(3)/3, samples=200] (\x,{(\x+sqrt(-3*\x*\x+4))/2}); 
\draw[ultra thick] plot [domain=-2/sqrt(3):2*sqrt(3)/3, samples=200] (\x,{(\x-sqrt(-3*\x*\x+4))/2});
\draw[ultra thick] plot [domain=2/sqrt(3):-2*sqrt(3)/3, samples=200] ({(\x-sqrt(-3*\x*\x+4))/2},\x); 
\draw[ultra thick] plot [domain=2/sqrt(3):-2*sqrt(3)/3, samples=200] ({(\x+sqrt(-3*\x*\x+4))/2},\x);  
\draw[ultra thick] plot [domain=-2/sqrt(3):2*sqrt(3)/3, samples=200] (\x,{(-\x+sqrt(-3*\x*\x+4))/2}); 
\draw[ultra thick] plot [domain=-2/sqrt(3):2*sqrt(3)/3, samples=200] (\x,{(-\x-sqrt(-3*\x*\x+4))/2});
\draw[ultra thick] plot [domain=2/sqrt(3):-2*sqrt(3)/3, samples=200] ({(-\x-sqrt(-3*\x*\x+4))/2},\x); 
\draw[ultra thick] plot [domain=2/sqrt(3):-2*sqrt(3)/3, samples=200] ({(-\x+sqrt(-3*\x*\x+4))/2},\x);  
\draw[ultra thick] plot [domain=-1.5:1.5] ({2/sqrt(3)}, \x);
\draw[ultra thick] plot [domain=-sqrt(2):sqrt(2), samples=200] (\x, {3*sqrt(2)/8+3*sqrt(-2*\x*\x+4)/8});
\draw[ultra thick] plot [domain={(3*sqrt(2)-6)/8}:{(3*sqrt(2)+6)/8}, samples=200] ({sqrt(24*sqrt(2)*\x-32*\x*\x+9)/3}, \x);
\draw[ultra thick] plot [domain={(3*sqrt(2)-6)/8}:{(3*sqrt(2)+6)/8}, samples=200] ({-sqrt(24*sqrt(2)*\x-32*\x*\x+9)/3}, \x);
\draw[fill] (1,0) circle(2pt);
\draw[fill] (-1,0) circle(2pt);
\draw[fill] ({2/sqrt(3)}, {1/sqrt(3)}) circle(2pt);
\draw[fill] ({2/sqrt(3)}, {-1/sqrt(3)}) circle(2pt);
\draw[fill] ({0.84}, {1.11}) circle(2pt);
\draw[fill] ({-0.84}, {1.11}) circle(2pt);
\draw ({2/sqrt(3)}, 1.5) node [right] {$L$};
\draw (-{2/sqrt(3)}, {-1/sqrt(3)}) node [left] {$C_1+C_2$};
\draw ({-sqrt(2)}, {3*sqrt(2)/8}) node [left] {$C_3$};
\end{tikzpicture}
\caption{The combinatorial type $\Comb(\mcC)$.}
\label{fig:comb}
\end{figure}
 Let $[T, X, Z]$ be homogeneous coordinates of $\PP^2$ and let $t=\frac{T}{Z}$ and $x=\frac{X}{Z}$ be affine coordinates. We consider the following curves that give  realizations of $\Comb(\mcC)$:
\begin{align*}
C_1&: x-t^2=0, \\
C_2&: x^2-10tx+25x-36=0, \\
C_3&: x-\left(\frac{5}{4}t^2-2t+3\right)=0 && \mbox{(weak contact conic of $\mcQ:=C_1+C_2$)}, \\
L_1&: x-\left(\frac{32}{5}t-\frac{256}{25}\right)=0 && \mbox{(bitangent line of $\mcQ$)}, \\
L_2&: x=0 && \mbox{(bitangent line of $\mcQ$)}, \\
L_3&: x-(10t-25)=0 && \mbox{(bitangent line of $\mcQ$)}, \\
L_4&: x-\left(\frac{18}{5}t-\frac{81}{25}\right)=0  && \mbox{(bitangent line of $\mcQ$)},
\end{align*}
Then the four conic-line arrangements $\mcC_i$ of degree $7$ 
of the form
\begin{align*}
    \mcC_i&:=C_1+C_2+C_3+L_i \qquad (i=1,2,3,4)
\end{align*}
have the combinatorial type $\Comb(\mcC)$.
Our main result regarding these curves is as follows:
\begin{thm}\label{thm:main}
$(\mcC_i,\mcC_j)$ is a $\pi_1$-equivalent Zariski pair if $\{i,j\}\ne\{1,2\}, \{3,4\}$.  Furthermore, the realization space of conic-line arrangements having the combinatorial type ${\rm Comb}(\mcC)$ has exactly two connected components.
\end{thm}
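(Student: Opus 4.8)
The two assertions are closely linked, and I would establish them together. The basic principle is that two arrangements lying in the same connected component of the realization space are joined by an equisingular family and are therefore homeomorphic as pairs $(\PP^2,\mcC)$. Hence any homeomorphism invariant that separates $\mcC_i$ from $\mcC_j$ forces them into distinct components, whereas an explicit deformation inside the realization space places two arrangements in one component. I would accordingly bound the number of components from above by a monodromy analysis together with explicit deformations $\mcC_1\leadsto\mcC_2$ and $\mcC_3\leadsto\mcC_4$, and from below by a finer-than-$\pi_1$ invariant separating $\{\mcC_1,\mcC_2\}$ from $\{\mcC_3,\mcC_4\}$; the latter simultaneously yields the Zariski-pair statement.

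For the upper bound let $\mathcal{R}\subset\PP H^0(\PP^2,\mcO(7))$ be the realization space of $\Comb(\mcC)$ and let $\rho\colon\mathcal{R}\to\mathcal{B}$ be the forgetful map recording only the datum $(\mcQ,C_3)$ of the transversal conic pair $\mcQ=C_1+C_2$ and its weak contact conic. I would first verify that $\mathcal{B}$ is connected: four nodes in general position are normalized by $\mathrm{PGL}_3$, the smooth conics through them form a connected pencil, and for fixed $C_1,C_2$ the weak contact conics through a chosen pair of the four nodes form a connected one-parameter family. The fiber of $\rho$ is the set of bitangent lines of $\mcQ$ meeting $C_3$ transversally, which over a dense open locus is exactly the four common tangents of $C_1$ and $C_2$; thus $\rho$ is a four-sheeted covering there and $\pi_0(\mathcal{R})$ is the set of orbits of the monodromy action of $\pi_1(\mathcal{B})$ on these four lines. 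Geometrically, passing through a fixed pair of the four nodes singles out one of the three pairings of the nodes and reduces the relevant symmetry to the stabilizer of that pairing, which I expect to split the four common tangents into two orbits of size two. To make this precise I would write down explicit loops in $\mathcal{B}$ whose monodromy interchanges $L_1\leftrightarrow L_2$ and $L_3\leftrightarrow L_4$, equivalently explicit one-parameter families in $\mathcal{R}$ joining $\mcC_1$ to $\mcC_2$ and $\mcC_3$ to $\mcC_4$; this gives at most two components.

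For the lower bound I must show that no deformation connects, say, $\mcC_1$ to $\mcC_3$, i.e.\ that the two groups are topologically distinct. The torsion-of-$\pic^0$ method is unavailable here since every component is rational, so I would instead pass to an auxiliary surface on which a nontrivial structure survives --- for instance a double or dihedral cover adapted to the contact configuration of $\mcQ+C_3$, or the fibration attached to the pencil of conics through $p_1,\dots,p_4$ in the spirit of the rational-elliptic-surface method --- and extract a discrete invariant of the position of $L_i$ relative to $C_3$, such as the splitting number of $L_i$ (or of $C_3$) in that cover. Because $C_3$ governs how $L_i$ meets the branch and contact data, I expect this invariant to be constant on each group and to differ between $\{L_1,L_2\}$ and $\{L_3,L_4\}$. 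Splitting numbers are invariants of the homeomorphism type of the pair, so this would show the two groups are not homeomorphic; hence each cross-group $(\mcC_i,\mcC_j)$ is a genuine Zariski pair and $\mathcal{R}$ has at least, and therefore exactly, two components. This is the main obstacle: choosing the right auxiliary cover, proving the chosen invariant is a genuine homeomorphism invariant, and checking that it actually separates the two groups (a naive double cover branched along $\mcQ$ alone makes every bitangent split and so fails to distinguish them).

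Finally, to see the pairs are $\pi_1$-equivalent I would compute $\pi_1(\PP^2\setminus\mcC_i)$ by the Zariski--van Kampen method relative to a generic pencil of lines, taking generators from a generic fiber and relations from the braid monodromy about the special fibers (the nodes of $\mcQ$, the tangencies of $C_3$ and of $L_i$ with $C_1$ and $C_2$, and the transversal meetings with $C_3$). I expect the presentation to reduce, for every $i$, to one and the same group of Coxeter type, so that matching presentations across the two groups gives $\pi_1(\PP^2\setminus\mcC_i)\cong\pi_1(\PP^2\setminus\mcC_j)$ for all $i,j$. The combinatorial verification that each $\mcC_i$ realizes $\Comb(\mcC)$, the connectedness of $\mathcal{B}$, the explicit deformations, and the van Kampen computation are routine though laborious; the genuine difficulty is concentrated in the splitting-invariant step of the previous paragraph.
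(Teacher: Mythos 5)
Your plan has the same skeleton as the paper's proof: explicit deformations give at most two components, a splitting invariant on a branched cover produces the Zariski pair (hence at least two components), and a Zariski--van Kampen computation shows the fundamental groups agree (the paper finds both to be free abelian of rank $3$, computed only for the representatives $\mcC_1$ and $\mcC_3$ of the two components). However, the step you yourself isolate as ``the main obstacle'' is a genuine gap, and your one concrete assertion about it points the wrong way. You discard the double cover branched along $\mcQ$ on the grounds that every bitangent splits in it and so it ``fails to distinguish'' the two groups. That objection applies only to the splitting \emph{number} of a single curve; the invariant that works --- and the one the paper uses --- is the splitting \emph{type} of the pair $(C_3,L_i;\mcQ)$ with respect to exactly this cover $f'_\mcQ\colon S'_\mcQ\to\PP^2$: writing $(f'_\mcQ)^\ast C_3=C_3^++C_3^-$ and $(f'_\mcQ)^\ast L_i=L_i^++L_i^-$, one records the pair of intersection numbers $\left(C_3^+\cdot L_i^+,\ C_3^+\cdot L_i^-\right)$. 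By \cite[Proposition~2.5]{bannai2016} this is an invariant of the homeomorphism type of $(\PP^2,\mcC_i)$ (under transversality hypotheses that hold here), and it equals $(0,2)$ for $i=1,2$ and $(1,1)$ for $i=3,4$; the paper computes it both conceptually, via height pairings on the Mordell--Weil lattice of $S_{\mcQ,z_o}$, and directly, via Gr\"obner bases of the zero-dimensional ideals cutting out $C_3^+\cap L_i^\pm$. So no dihedral cover or auxiliary fibration is needed: the missing idea is the refinement from ``does $L_i$ split'' to ``how do the split pieces of $C_3$ and $L_i$ meet each other.''

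There is also a flaw in your upper bound. You justify connectedness of $\mathcal{B}$ by asserting that, for fixed $\mcQ$, the weak contact conics through a chosen pair of the nodes form a connected one-parameter family. They do not: there are two such families $\{C_{3,a}\}$ and $\{C'_{3,a}\}$, two conics for each tangency point $z_a\in C_1$ (Lemma~\ref{lem:wcc}). These families are joined only by loops that move $C_2$, and those same loops are precisely the ones that permute the four bitangents (a loop fixing $\mcQ$ cannot move them at all). This coupling is what the paper's family $C_{2,b}$ and Table~\ref{table:curves} track: the monodromy acts on the eight pairs (choice of $C_3$-family, choice of bitangent) with two orbits of size four, each orbit mixing the two $C_3$-families. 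Your decoupled picture --- connected $C_3$-family for fixed $\mcQ$, then monodromy acting on the four tangent lines --- arrives at the correct count, but its first half is false as stated and must be replaced by this coupled analysis before the bound of two components is established.
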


We use the invariant called the splitting type, defined in \cite{bannai2016}, to distinguish the embedded topology of the conic-line arrangements in Theorem~\ref{thm:main}.
We can compute the splitting type for the curves in Theorem~\ref{thm:main} from the construction through the rational elliptic surfaces without using the defining equations.
We also demonstrate another computation of the splitting types, using the defining equations and Gr\"obner bases of $0$-dimensional ideals as verification.
By using Gr\"obner bases of $0$-dimensional ideals, the computation of the splitting types is simpler.

This paper is organized as follows. In Section \ref{sec:construction} we briefly describe Mumford representations and their relation with Gr\"obner bases. Then we describe the construction of the curves, which utilizes the theory of elliptic surfaces. In Section \ref{sec:deformation}, we study the realization space of conic-line arrangements having  combinatorics $\Comb(\mcC)$. In Section \ref{sec:splitting} we calculate the splitting type in two ways - one uses the theory of elliptic surfaces and is conceptual, while the other is a concrete calculation using Gr\"obner bases - both prove the existence of a Zariski pair with $\Comb(\mcC)$. 
The existence of a Zariski pair makes it possible to  determine the number of connected components. 
In Section \ref{sec:fundamental-groups} we calculate the fundamental groups of the curves in each connected component. The combination of the results of Sections \ref{sec:deformation}, \ref{sec:splitting},   and  \ref{sec:fundamental-groups} gives the proof of Theorem \ref{thm:main}.


\paragraph{Acknowledgments:}

The second author is partially supported by JSPS KAKENHI Grant Number JP18K03263 and JP23K03042. The third author is partially supported by JSPS KAKENHI Grant Number 21K03182. The fourth author is partially supported by SCE research grant 2023. The fifth author is partially supported by JSPS KAKENHI Grant Number 20K03561.   

\section{Construction of curves}\label{sec:construction}
In this section, we briefly describe how to construct the curves in the main theorem. The method is based on previous works of the second and fifth author and collaborators using elliptic surfaces and Gr\"obner basis techniques. See \cite{tokunaga2014},\cite{takahashi_tokunaga2020explicit},  \cite{bannai_tokunaga2021} for details.


\subsection{Mumford representation and Gr\"obner bases}

We give a brief summary about representations of divisors on hyperelliptic curves and
our method in constructing plane curves based on those representations. In this paper, we only work
in the case of elliptic curves, but we here give explanations  in general settings.

We refer to \cite{CLO} for the general theory of Gr\"obner bases. Also, as for details on 
semi-reduced or fully-reduced divisors on hyperelliptic curves and their representations
via Gr\"obner bases, we refer to \cite{galbraith} and \cite{takahashi_tokunaga2021}. 

Let $K$ be a perfect field of ${\mathrm{ch}}(K)\neq 2$ and let  $\Kbar$ denote its algebraic
closure. Let $\mcC$ be a hyperelliptic curve defined over $K$ given by
\[
\mcC: y^2 = F(x), \quad F(x) = x^{2g+1}+ c_1x^{2g} + \ldots + c_{2g+1}, \quad c_i \in K.
\]
$\mcC$ has a unique point at infinity, which we denote by $O$. We denote the hyperelliptic
involution $(x, y) \mapsto (x, -y)$ by $\iota$. We denote, also, the coordinate ring of $\mcC$ by
$\Kbar[\mcC]$ and its quotient field by $\Kbar(\mcC)$. For $P \in \mcC$, $\mcO_P$ denotes the local ring
at $P$ and $\ord_P$ means a valuation at $P$. For $g \in \Kbar[x,y]$, $[g]$ means its class in $\Kbar[\mcC]$.

 \begin{Def}\label{def:semi-reduced}
{\rm Let $\fd = \sum_{P\in \mcC}m_PP$ be a divisor on a hyperelliptic curve $\mcC$ . 
  
  \begin{enumerate}
  
  \item[(i)] The divisor $\fd$ is said to be an {\it affine divisor} if ${\mathrm{Supp}}(\fd) \subset \mcC_{{\mathrm{aff}}}:= \mcC\setminus \{O\}$.
  
  \item[(ii)] An effective affine divisor $\fd$ is said to be  {\it semi-reduced} if it satisfies the following conditions:
  \begin{enumerate}
       \item[(a)] $m_P = 1$ if $m_P > 0$ and $P = \iota(P)$, and
      \item[(b)] $m_{\iota(P)} = 0$ if $m_P > 0$ and $P\neq \iota(P)$. 
       \end{enumerate}

  \item[(iii)] 
  A semi-reduced divisor $\sum_i m_i P_i$ is said to be {\it fully-reduced} if $\sum_im_i \le g$.
  
  \end{enumerate}
  }
  \end{Def}

  \begin{rmk}{\rm In \cite{galbraith}, a fully-reduced divisor is simply called reduced. In this paper, we use fully-reduced to avoid confusion with {\it reduced} in the usual sense. Note that we use the terminology $h$-reduced for fully-reduced in 
  \cite{takahashi_tokunaga2021}.
  }
  \end{rmk}

 The following lemma is fundamental for semi-reduced and fully-reduced divisors:
 
\begin{lemma}\label{lem:semi-reduced1}{
\begin{enumerate}

 \item[\rm(a)]  For any divisor $\fd = \sum_Pm_PP$ with $\Supp(\fd) \neq \emptyset$, there exists a semi-reduced
divisor $\sr(\fd)$ such that 
 {\rm (i)} $\fd - (\deg \fd)O \sim \sr(\fd) - (\deg\sr(\fd))O$ and
{\rm (ii)} $|\fd| \ge |\sr(\fd)| (= \deg \sr(\fd))$.
Here  we put $\deg \fd := \sum_P m_P$ and
 $|\fd| := \sum_P|m_P|$.

\item[{\rm (b)}] Let $\fd$ be any semi-reduced divisor on $\mcC$ with $\deg \fd > g$. Then there exists a
unique fully-reduced divisor $\red(\fd)$ such that $\fd - (\deg\fd) O\sim \red(\fd) - (\deg{\red(\fd)})O$.

\item[{\rm (c)}]  With the two statements as above,  we see that for any element $\fd \in \Div^0(\mcC)$, there exists a unique
fully-reduced divisor $\red(\fd)$ such that $\fd \sim \red(\fd) - (\deg\red(\fd))O$.

\end{enumerate}
}
\end{lemma}
As for proofs, see \cite{galbraith}.

\begin{rmk}{\rm By Lemma~\ref{lem:semi-reduced1}, any element in $\pic^0(\mcC)$ is represented by some divisor of the form
$\fd - \deg \fd O$, where $\fd$ is a semi-reduced divisor. In hyperelliptic cryptography, the addition on $\pic^0(\mcC)$ is
described in terms of semi-reduced divisors. See \cite{galbraith}.
}
\end{rmk}
\begin{lemma}\label{lem:mumford}{
For a semi-reduced divisor $\fd=\sum_Pe_PP$, there exists a unique pair $(u, v)$ of
polynomials in $\Kbar[x]$ such that 
\begin{itemize}
    \item $u= \prod_{P\in \Supp(\fd)}(x - x_P)^{e_P}$,
    \item $\deg v < \deg u$, $y_P = v(x_P)$ and
    \item $\ord_P(y - v) \ge e_P$ for $\forall P \in \Supp(\fd)$.
\end{itemize}
In particular, $v^2 - F$ is divisible by $u$.
}
\end{lemma}
See \cite{galbraith} for a proof.

\begin{Def}\label{def:mumford}{\rm Let $\fd$ be a semi-reduced divisor as in Lemma~\ref{lem:mumford}. The pair $(u, v)$ in Lemma~\ref{lem:mumford} is
called the Mumford representation of $\fd$.
}
\end{Def}

Given a semi-reduced divisor $\fd = \sum_Pe_P P$, 
we define  ideals $I(\fd) \subset \Kbar[\mcC]$ and $\widetilde{I(\fd)} \subset \Kbar[x, y]$ as follows:
\begin{eqnarray*}
I(\fd) &:=& \{[g] \in \Kbar[\mcC] \mid \ord_P([g]) \ge e_P, \mbox{for $\forall P \in \Supp(\fd)$}\}, \\
\widetilde{I(\fd)} & := & \{ g \in \Kbar[x, y] \mid [g] \in I(\fd)\}
\end{eqnarray*}

\begin{prop}\label{prop:mumford}{Let $\fd$ be a semi-reduced divisor on $\mcC$ as in Lemma~\ref{lem:mumford} and let $(u, v)$ be its 
Mumford representation. Then $\{u, y - v\}$ is the reduced Gr\"obner basis of $\widetilde{I(\fd)}$ with respect to the pure lexicographic
order $>$ with $y>x$.
}
\end{prop}
See \cite[Proposition 2.8]{takahashi_tokunaga2021}. 

\begin{rmk}{\rm If a semi-reduced divisor $\fd$ is defined over $K$, $u, v \in K[x]$.
}
\end{rmk}



\subsection{Remark about the addition on an elliptic curve via
Mumford representation}\label{sub-sec:addition}

In the case of $g=1$, $\mcC$ is an elliptic curve. We use
$E$ instead of $\mcC$. Let $\fd = P_1 + P_2, \  P_i=(x_i, y_i), i = 1, 2$ be a semi-reduced divisor on $E$ of degree $2$. The Mumford
representation $(u_{\fd}, v_{\fd})$ is given by
\[
u_{\fd}= (x - x_1)(x - x_2), \quad v_{\fd} = mx + n,
\]
where $y = mx + n$ is the line $L_{\fd}$ connecting $P_1$ and $P_2$ 
(if $P_1 = P_2$, $L_{\fd}$ is the tangent line of $E$ at $P_1$). These can be
computed in the following way. As 
$I(\fd) = I(P_1)I(P_2)$ in $\Kbar[\mcC]$, we see that
\[
\widetilde{I(\fd)} = \langle u_{\fd}, y - v_{\fd} \rangle =
\langle (x - x_1)(x - x_2), (x - x_1)(y - y_2), (x - x_2)(y - y_1), 
(y - y_1)(y - y_2), y^2 -F \rangle.
\]
We now use Proposition~\ref{prop:mumford} to  compute $u_{\fd}, v_{\fd}$. Once $u_{\fd}$
and $v_{\fd}$ are obtained, we compute $(v_{\fd}^2 - f)/u_{\fd}$ and get the $x$-coordinate of 
the third intersection point between $\mcC$ and $L_{\fd}: y - v_{\fd} = 0$. 
Let $(x_3, y_3), \ y_3 = mx_3 + n$ be another point in $L_{\fd}\cap E$ ($(x_3, y_3)$ may
coincide with $P_1$ or $P_2$). 
Hence, 
$P_1\dot + P_2 = (x_3, - y_3)$, where $P_1\dot+P_2$ denotes the sum of $P_1$ and $P_2$ under the group law of $E$.


We now consider the case where $K=\CC(t)$ to construct curves with prescribed combinatorics. Given two points in $E(\CC(t))$, $P_i=(x_i(t), y_i(t))$ ($i=1, 2$), we can obtain additional new points by using the group law of $E$, such as $P_3=P_1\dot+P_2=(x_3(t), t_3(t))$ and $P_4=P_1\dot+[-1]P_2=(x_4(t), y_4(t))$. Here, the equations $x-x_i(t)=0$ ($i=1, 2, 3, 4$) give rise to plane curves in $(t, x)$-space and, in turn, in $\PP^2$. In this way, we see that the two curves $x-x_i(t)$ ($i=1, 2$)  produce new plane curves $x-x_i(t)$ ($i=3, 4$) which are related to the original curves. A detailed explanation of this construction is provided in Section \ref{sub-sec:construction}.

\subsection{Construction of the curves}\label{sub-sec:construction}
In this subsection we describe the elliptic surfaces and the computations of the equations that provide the curves in the main theorem. 
Consider a quartic $\mcQ=C_1+C_2$, which is a union of two smooth conics intersecting transversally.  
Let $z_o\in C_1$ be a point distinct from the intersection points. We can associate a rational elliptic surface $\varphi: S_{\mcQ, z_o}\rightarrow \PP^1$ to $E_{\mcQ}$ so that $S_{\mcQ, z_o}$ fits into the following commutative diagram:
\[
\begin{tikzcd}
S^\prime_{\mcQ} \arrow[d, "f^\prime_{\mcQ}"'] & S_{\mcQ}  \arrow[l,"\mu"'] \arrow[d, "f_{\mcQ}"] &  S_{\mcQ, z_o} \arrow[l, "\nu_{z_o}"'] \arrow[d, "f_{\mcQ, z_o}"]\\
\PP^2 & \widehat{\PP^2} \arrow[l,"q"] & (\widehat{\PP^2})_{z_o} \arrow[l, "q_{z_o}"]
\end{tikzcd}
\]
Here, $f'_{\mcQ}$ is the double cover of $\PP^2$ branched along $\mcQ$, $\mu$ is the resolution of singularities, and $\nu_{z_o}$ is the resolution of the indeterminacy of the pencil of genus 1 curves $\Lambda_{z_o}$ on $S_{\mcQ}$, which is the pencil induced by the pencil of lines through $z_o$ in $\PP^2$. Note that $\Lambda_{z_o}$ induces the structure of the elliptic fibration $\varphi: S_{\mcQ, z_o}\rightarrow \PP^1$. We will regard the exceptional divisor of the second blow-up in $\nu_{z_o}$ as the zero-section $O$.
Let $\MW(S_{\mcQ, z_o})$ be the Mordell-Weil lattice of $S_{\mcQ, z_o}$, which is the set of sections of $S_{\mcQ, z_o}$ endowed with a pairing $\langle, \rangle$ called the height pairing.
If  the tangent line of $C_1$ at $z_o$ intersects $C_2$ transversally and is a simple tangent line of $\mcQ$,  $S_{\mcQ, z_o}$ will have 5 singular fibers of type $\I_2$ and $\MW(S_{\mcQ, z_o})\cong (A_1^\ast)^{\oplus 3}\oplus \ZZ/2\ZZ$, by the list in Oguiso-Shioda \cite{oguiso-shioda}. If the tangent line of $C_1$ at $z_o$ is tangent to $C_2$ and is a bitangent of $\mcQ$, $S_{\mcQ, z_o}$ will have 1 singular fiber of type $\I_3$ and 4 singular fibers of type $\I_2$, and $\MW(S_{\mcQ, z_o})\cong \frac{1}{6}\left(\begin{array}{cc} 2 & 1 \\ 1 & 2 \end{array}\right)\oplus \ZZ/2\ZZ$, also by the list in \cite{oguiso-shioda}.

Let $[T, X, Z]$ be homogeneous coordinates of $\PP^2$ and let $t=\dfrac{T}{Z}$, $x=\dfrac{X}{Z}$ be affine coordinates. 
We can choose coordinates so that $z_o=[0,1,0]$, the tangent line at $z_o$ is $Z=0$, and the defining equation of the affine part of $C_1$ is $x-t^2=0$. Then, because $C_1$ and $C_2$ intersect transversally,  the defining equation of the affine part of $\mcQ$ can be assumed to be of the form 
\[
\mcQ: F(x, t)=(x-t^2)(x^2+a_1(t)x+a_2(t)),
\]
where $a_i(t)\in \CC[t]$ and $\deg_t a_i(t)\leq i$ ($i=1, 2$). We can consider an elliptic curve $E_{\mcQ, z_o}$ over $\CC(t)$ given by the Weierstrass equation $y^2=F(t, x)$.
Let $E_{\mcQ, z_o}(\CC(t))$ be the set of $\CC(t)$ rational points of $E_{\mcQ, z_o}$. It is known that there is a bijection between $\MW(S_{\mcQ, z_o})$ and $E_{\mcQ, z_o}(\CC(t))$. For $s\in \MW(S_{\mcQ, z_o})$ we denote the rational point corresponding to $s$ by $P_s$, and for $P\in E_{\mcQ, z_o}(\CC(t))$, we denote the section corresponding to $P$ by $s_P$. Under this correspondence, we have $s_{P_1\dot+P_2}=s_{P_1}\dot+s_{P_2}$. Furthermore, we denote the image $f^{\prime}_{\mcQ}\circ\mu\circ\nu_{z_o}(s)\subset \PP^2$ of a section $s$, by $\mcC_s$. Also,  for a rational point $P$, $\mcC_{P}:=\mcC_{s_P}$. Note that $\mcC_s$ is a curve if $s$ is not the zero-section and $\mcC_s=\mcC_{[-1]s}$. 

Let $C_1\cap C_2=\{p_1, p_2, p_3,  p_4\}$ and let the coordinates of  $p_i$ be $(t_i, t_i^2)$ $(i=1, 2, 3, 4)$. Let $L_{ij}$ be the line through $p_i, p_j$. Then, the affine defining equation of $L_{ij}$ is given by
\[
L_{ij}: x-(t_i+t_j) t+t_it_j=0.
\]
As $L_{ij}$ intersects $\mcQ=C_1+C_2$ at $p_i=(t_i, t_i^2), p_j=(t_j, t_j^2)$ each with multiplicity $2$, we see that 
\[
F(t, (t_i+t_j)t-t_it_j)=c_{ij}(t-t_i)^2(t-t_j)^2.
\]
Additionally, $L_{ij}$ gives rise to $\CC(t)$-rational points of $E_{\mcQ, z_o}$ of  the form
\[
\dot\pm P_{ij}=(x_{ij}, \pm y_{ij})=( (t_i+t_j)t-t_it_j, \pm d_{ij}(t-t_i)(t-t_j))\quad (d_{ij}^2=c_{ij}),
\]
which in turn correspond to sections $s_{ij}:=s_{P_{ij}}\in \MW(S_{\mcQ, z_o})$. We will use these rational points and sections to construct the curves that we consider in the main theorem, (i.e., bitangent lines and weak contact conics of $\mcQ$). First, we consider the weak contact conics.

\begin{lemma}\label{lem:wcc}
    Let $C$ be a smooth weak contact conic of $\mcQ=C_1+C_2$ passing through $p_i$ and $p_j$.
    If $C$ is tangent to $C_1$ at $z_o$ then $C=\mcC_{s_{ik}\dot+s_{kj}}$ or $C=\mcC_{s_{ik}\dot-s_{kj}}$ for $\{i, j, k\}\subset\{1, 2, 3, 4\}$ and $s_{ik}, s_{kj}\in \MW(S_{\mcQ, z_o})$. Moreover, for each choice of $\{p_i, p_j\}\subset \{p_1, p_2, p_3, p_4\}$ there are at most two smooth weak contact conics passing through $p_i, p_j$ and tangent to $C_1$ at $z_o$.
\end{lemma}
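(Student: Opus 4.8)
The plan is to set up a dictionary between smooth weak contact conics of the prescribed type and certain sections of $S_{\mcQ, z_o}$, and then to count via a discriminant computation. First I would record that such a $C$ meets $\mcQ = C_1 + C_2$ with even multiplicity at every intersection point: it passes through $p_i, p_j$ (each a node of $\mcQ$, contributing $1$ to each branch, hence multiplicity $2$), is tangent to $C_1$ at $z_o$ (multiplicity $2$), and is tangent to $C_2$ at a point $w \notin \{p_1, \dots, p_4\}$ (multiplicity $2$); by Bézout these exhaust $C \cap \mcQ$. Since all local multiplicities are even, the even intersection multiplicities make $F(t, \phi(t))$ a perfect square in $\CC[t]$, so the preimage $(f'_{\mcQ})^{-1}(C)$ splits into two components exchanged by the covering involution; passing to $S_{\mcQ, z_o}$, each component is a genuine section $s$ (resp.\ $\dot-s$) rather than an irreducible bisection, with $\mcC_s = C$ and rational point $P_s = (\phi(t), y(t))$ where $\phi$ has degree $2$.

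Next I would parametrize. Writing $z_o = [0,1,0]$ and using $C_1 : x = t^2$, the tangency of $C$ to $C_1$ at the point at infinity $z_o$ together with $\phi(t_i) = t_i^2$, $\phi(t_j) = t_j^2$ forces
\[
\phi(t) - t^2 = (\alpha - 1)(t - t_i)(t - t_j), \qquad \alpha \in \CC \setminus \{0, 1\},
\]
so the conics through $p_i, p_j$ tangent to $C_1$ at $z_o$ form a one-parameter family $C_\alpha$ ($\alpha$ the leading coefficient of $\phi$; $\alpha = 1$ recovers $C_1$ and $\alpha = 0$ the line $L_{ij}$). Since $p_i, p_j \in C_2$, the residual intersection is cut out by $\phi^2 + a_1 \phi + a_2 = (t - t_i)(t - t_j)\, g_\alpha(t)$ with $\deg_t g_\alpha = 2$, and $C_\alpha$ is a weak contact conic precisely when $\mathrm{disc}_t(g_\alpha) = 0$. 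Setting $\beta = \alpha - 1$ and using $t^4 + a_1 t^2 + a_2 = \prod_{m=1}^4 (t - t_m)$, one gets $g_\alpha = (t - t_k)(t - t_l) + \beta(2t^2 + a_1) + \beta^2(t - t_i)(t - t_j)$, where $\{k,l\} = \{1,2,3,4\}\setminus\{i,j\}$. I would then check that both the leading and the linear coefficients of $g_\alpha$ in $t$ vanish at $\beta = -1$, so that $\mathrm{disc}_t(g_\alpha)$ — a polynomial of degree $4$ in $\beta$ with leading coefficient $(t_i - t_j)^2 \ne 0$ — has $\beta = -1$ (the degenerate line $L_{ij}$) as a \emph{double} root. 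Hence at most two values of $\alpha$ survive, proving the second assertion.

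For the first assertion I would run the correspondence in reverse. Using the factorization $x_{ik} - x_{kj} = (t_i - t_j)(t - t_k)$, the group-law slope for $P_{ik} \dot\pm P_{kj}$ collapses to the degree-$1$ polynomial $\lambda = \bigl(d_{ik}(t - t_i) \mp d_{kj}(t - t_j)\bigr)/(t_i - t_j)$, so the $x$-coordinate $x = \lambda^2 + t^2 - a_1 - x_{ik} - x_{kj}$ is a degree-$2$ polynomial and $\mcC_{s_{ik}\dot+ s_{kj}}$, $\mcC_{s_{ik}\dot- s_{kj}}$ are conics. As images of sections they meet $\mcQ$ with even multiplicity, and one verifies that they pass through $p_i, p_j$ and are tangent to $C_1$ at $z_o$; Bézout then forces a tangency to $C_2$, so both are weak contact conics. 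They are distinct because their leading coefficients differ by $4 d_{ik} d_{kj}/(t_i - t_j)^2 \ne 0$, so by the count above these two exhaust all such conics, and every smooth weak contact conic through $p_i, p_j$ tangent to $C_1$ at $z_o$ equals $\mcC_{s_{ik}\dot+ s_{kj}}$ or $\mcC_{s_{ik}\dot- s_{kj}}$.

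The hard part will be the multiplicity bookkeeping at the point at infinity $z_o$ — confirming that tangency to $C_1$ there is equivalent to $\alpha \ne 1$, and that the splitting in the double cover yields an honest section and not a bisection — together with the verification that $\beta = -1$ is a double rather than a simple root of the discriminant, which is exactly what brings the naive count of four down to two. The group-law computation is routine once the factorization $x_{ik} - x_{kj} = (t_i - t_j)(t - t_k)$ is in hand.
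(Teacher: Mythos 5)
Your proposal is not the paper's argument: the paper pins down the sections $\dot\pm s_C$ attached to $C$ inside the Mordell--Weil lattice by computing their height pairings against the generators $s_{12},s_{23},s_{31}$, whereas you parametrize all conics through $p_i,p_j$ tangent to $C_1$ at $z_o$ by the leading coefficient $\alpha$, count zeros of $\mathrm{disc}_t(g_\alpha)$, and then exhibit two candidates by an explicit group-law computation. The counting half of your argument is sound and, in fact, more self-contained than the paper's sketch: the formula $g_\alpha=(t-t_k)(t-t_l)+\beta(2t^2+a_1)+\beta^2(t-t_i)(t-t_j)$ is correct, the linear coefficient of $g_\alpha$ does vanish at $\beta=-1$ (using $a_{1,1}=-(t_1+t_2+t_3+t_4)$), and the discriminant is degree $4$ in $\beta$ with leading coefficient $(t_i-t_j)^2$ and a double root at $\beta=-1$, giving ``at most two'' in all cases.

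The gap is in your proof of the first assertion, and it is exactly the dichotomy the paper records in Lemma~\ref{lem:wcc2}. When the tangent line of $C_1$ at $z_o$ is a \emph{bitangent} of $\mcQ$ --- a case this lemma must cover, since it is invoked in Section~\ref{sec:deformation} for arbitrary arrangements, where the tangency point of $C_1$ and $C_3$ is unconstrained --- your claim that both $\mcC_{s_{ik}\dot+s_{kj}}$ and $\mcC_{s_{ik}\dot-s_{kj}}$ are smooth weak contact conics is false: one of them coincides with the line $L_{ij}$. Concretely, the leading coefficient of the $x$-coordinate of $P_{ik}\dot\pm P_{kj}$ is $\bigl((d_{ik}\mp d_{kj})/(t_i-t_j)\bigr)^2+1$, and in the bitangent case this vanishes for one choice of sign, so that curve is a line, not a conic tangent to $C_1$ at $z_o$. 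Your exhaustion logic (``at most two exist, and here are two distinct ones of the required form'') then no longer closes: a priori the at-most-two such conics could be the given $C$ together with the one non-degenerate $\mcC_{s_{ik}\dot\pm s_{kj}}$, with $C$ equal to neither candidate. The repair within your framework is to sharpen the count in the bitangent case: there $\beta=-1$ is a root of $\mathrm{disc}_t(g_\alpha)$ of multiplicity at least three (its second derivative also vanishes once one substitutes $a_{2,2}=a_{1,1}^2/4$, which is the bitangency condition), so at most \emph{one} smooth weak contact conic exists; after verifying that the non-degenerate member of the pair $\mcC_{s_{ik}\dot\pm s_{kj}}$ is such a conic, $C$ must equal it. Alternatively, run the forward direction uniformly as the paper does, determining $s_C$ from its height pairings in whichever of the two lattices $\MW(S_{\mcQ,z_o})$ occurs.
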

\begin{proof}
We only give a rough sketch of the proof. Let $C$ be a (weak) contact conic of $\mcQ=C_1+C_2$ tangent  to $C_1$ at $z_o$. We consider the elliptic surface $S_{\mcQ, z_o}$ associated to $\mcQ=C_1+C_2$ and $z_o$, as above. Here, $\MW(S_{\mcQ, z_o})\cong (A_1^\ast)^{\oplus 3}\oplus \ZZ/2\ZZ$ or $\frac{1}{6}\left(
\begin{array}{cc} 2 & 1 \\ 1 & 2 \end{array}
\right)\oplus \ZZ/2\ZZ$, depending on whether the tangent line at $z_o$ is a simple tangent or a bitangent of $\mcQ$. In the former case, $s_{12}, s_{23}, s_{31}$ gives a basis of the $(A_1^\ast)^{\oplus 3}$ part, while in the latter case, $s_{12}, s_{31}$ gives a basis of the $\frac{1}{6}\left(
\begin{array}{cc} 2 & 1 \\ 1 & 2 \end{array}
\right)$ part. The preimage of the strict transform of $C$ in $S_{\mcQ, z_o}$ gives a pair of  sections $\dot\pm s_C$ of $S_{\mcQ, z_o}$. The data of the intersection points $p_1, \ldots, p_4$ that $C$ passes through gives the data of the intersection of $\dot\pm s_C$ and the singular fibers of $S_{\mcQ, z_o}$ and the data of the values of height pairings with $s_{ij}$. Then, as we know the lattice structure of $\MW(S_{\mcQ, z_o})$, and the values of the height pairings of $\dot\pm s_C$ with the basis elements, we can deduce that $\dot\pm s_C=\dot\pm(s_{ik}\dot+s_{kj})$ or $\dot\pm(s_{ik}\dot-s_{kj})$. Finally, note that $s_{ik}\dot+s_{kj}=s_{il}\dot+s_{lj}$, hence we see that there are at most two possibilities for $C$.
See \cite{tokunaga2014, bannai-tokunaga-yamamoto2020, masuya2023} for details on similar arguments.
\end{proof}

Concerning the converse of Lemma \ref{lem:wcc}, we have the following:

\begin{lemma} \label{lem:wcc2}
 If the tangent line at $z_o$ is a simple tangent of $\mcQ$,
 the curves $\mcC_{s_{ik}\dot+s_{kj}}$ and $\mcC_{s_{ik}\dot-s_{kj}}$ are smooth weak contact conics passing through $p_i, p_j$ and tangent to $C_1$ at $z_o$. If the tangent line at $z_o$ is a bitangent of $\mcQ$,
 one of $\mcC_{s_{ik}\dot+s_{kj}}$ and $\mcC_{s_{ik}\dot-s_{kj}}$ will be a smooth weak contact conic passing through $p_i, p_j$ tangent to $C_1$ at $z_o$ and the other will coincide with $L_{ij}$.
\end{lemma}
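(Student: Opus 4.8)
The plan is to use the explicit dictionary between sections and plane curves coming from the construction: a $\CC(t)$-rational point $P=(x_P(t),y_P(t))$ of $E_{\mcQ,z_o}$ is sent by $f'_{\mcQ}\circ\mu\circ\nu_{z_o}$ to the affine curve $x-x_P(t)=0$, so $\mcC_{s_P}$ is, after homogenization, a curve of degree $\deg_t x_P(t)$. Since the graph of a polynomial of degree exactly $2$ is a nondegenerate---hence smooth and irreducible---conic, while a degree-$1$ polynomial gives a line, the whole statement reduces to computing the $x$-coordinates of $P_{ik}\dot+P_{kj}$ and $P_{ik}\dot-P_{kj}$ and reading off their degrees in $t$; the leading coefficient will at the same time decide between a smooth conic and a degeneration.

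First I would run the chord-and-tangent law on the model $y^2=F(t,x)$, whose coefficient of $x^2$ is $a_1(t)-t^2$. With $P_{ik}=(x_{ik},y_{ik})$ and $P_{kj}=(x_{kj},y_{kj})$ given by the formulas above, the decisive cancellation is
\[
x_{kj}-x_{ik}=(t_j-t_i)(t-t_k),
\]
and the relevant combination of the $y$-coordinates likewise carries the factor $t-t_k$; hence the chord slope collapses, up to sign, to the linear polynomial
\[
\lambda_\pm=\frac{d_{kj}(t-t_j)\mp d_{ik}(t-t_i)}{t_j-t_i}\qquad(\text{upper sign for }\dot+).
\]
Therefore $x_\pm=\lambda_\pm^2+t^2-a_1(t)-x_{ik}-x_{kj}$ is a polynomial of degree at most $2$ whose coefficient of $t^2$ equals $\left(\frac{d_{kj}\mp d_{ik}}{t_j-t_i}\right)^2+1$. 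So $\mcC_{s_{ik}\dot\pm s_{kj}}$ is a smooth conic unless $(d_{kj}\mp d_{ik})^2=-(t_j-t_i)^2$, in which case $x_\pm$ drops to degree $1$ and the curve is a line. The two leading coefficients cannot vanish simultaneously, as that would force $d_{ik}d_{kj}=0$, contradicting $c_{ij}=d_{ij}^2\neq 0$; hence at most one of the two combinations degenerates.

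The crux is to match this vanishing with the simple-tangent/bitangent dichotomy. Comparing leading coefficients in $F(t,L_{ij}(t))=c_{ij}(t-t_i)^2(t-t_j)^2$ yields $c_{ij}=-\phi(t_i+t_j)$, where $\phi(s)=s^2+\alpha_1 s+\beta_2$ and $\alpha_1,\beta_2$ are the top coefficients of $a_1,a_2$. On the other hand, $Z=0$ cuts out the scheme $X^2+\alpha_1 TX+\beta_2 T^2=0$ on $C_2$, so the tangent line at $z_o$ is a bitangent of $\mcQ$ exactly when $\phi$ is a perfect square. Writing $\phi(s)=(s-r)^2$ in that case gives $d_{ik}^2=-(t_i+t_k-r)^2$ and $d_{kj}^2=-(t_k+t_j-r)^2$, and a direct substitution shows the leading coefficient vanishes for exactly one choice of sign, whereas for distinct roots of $\phi$ it vanishes for neither. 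I would reinforce this conceptually through $\MW(S_{\mcQ,z_o})$: the line-sections $s_{ij}$ are the minimal vectors (norm $\tfrac12$ for $(A_1^\ast)^{\oplus 3}\oplus\ZZ/2\ZZ$, norm $\tfrac13$ for $\frac{1}{6}\left(\begin{array}{cc}2&1\\1&2\end{array}\right)\oplus\ZZ/2\ZZ$), while the contact-conic sections have norm $1$; in the first lattice both $s_{ik}\dot\pm s_{kj}$ have norm $1$, whereas in the second exactly one of them is again a minimal (line) vector and the other has norm $1$. Reconciling the two viewpoints---pinning down the precise norm-to-degree correspondence---is the main obstacle.

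It remains to name the degenerate line and to confirm the contact profile of the conics. In the line case $x_\pm$ is linear, and evaluating it at $t=t_i$ and $t=t_j$ (where $\lambda_\pm$, $x_{ik}$ and $x_{kj}$ simplify) gives $x_\pm(t_i)=t_i^2$ and $x_\pm(t_j)=t_j^2$, so the line passes through $p_i,p_j$ and must be $L_{ij}$; equivalently $s_{ik}\dot\pm s_{kj}=\dot\pm s_{ij}$. In the conic case the same evaluations give passage through $p_i,p_j$; tangency to $C_1$ at $z_o$ is automatic since every such conic meets $C_1$ at $[0,1,0]$ with common tangent $Z=0$; and tangency to $C_1$ and $C_2$ at the two remaining points is forced by $F(t,x_\pm(t))=y_\pm(t)^2$ being a perfect square. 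The full weak-contact bookkeeping is identical to that in the proof of Lemma~\ref{lem:wcc}, now read in the forward direction.
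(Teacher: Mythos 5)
Your proposal is correct, but it takes a genuinely different route from the paper. The paper disposes of this lemma in one sentence: it runs the proof of Lemma~\ref{lem:wcc} backwards, starting from the height pairings and the intersection pattern of $s_{ik}\dot\pm s_{kj}$ with the singular fibers of $S_{\mcQ,z_o}$ and reading off the geometric data of $\mcC_{s_{ik}\dot\pm s_{kj}}$, delegating details to the cited literature; that lattice argument is precisely the ``reinforcement'' you mention only in passing (and your norm count there is right: in the bitangent case exactly one of $s_{ik}\dot\pm s_{kj}$ has norm $1$ and the other is again a minimal vector of norm $\frac{1}{3}$, hence a line section). Your primary argument instead runs through the explicit group law, and its key steps check out: $x_{kj}-x_{ik}=(t_j-t_i)(t-t_k)$, so the slope $\lambda_\pm$ collapses to a linear polynomial; the $t^2$-coefficient of $x_\pm$ is $\bigl(\frac{d_{kj}\mp d_{ik}}{t_j-t_i}\bigr)^2+1$; both signs cannot degenerate simultaneously since $d_{ik}d_{kj}\ne 0$; the relation $c_{ij}=-\phi(t_i+t_j)$ holds; and the degeneration condition for either sign reduces, after squaring away the sign ambiguity in the $d$'s, to $(t_i-t_j)^2(\alpha_1^2-4\beta_2)=0$, i.e.\ exactly to $Z=0$ being tangent to $C_2$ --- this is the identity that makes the dichotomy work, and it is correct. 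The evaluations $x_\pm(t_i)=t_i^2$ and $x_\pm(t_j)=t_j^2$, which give passage through $p_i,p_j$ and identify the degenerate line with $L_{ij}$, also go through (they require combining $p_i,p_j\in C_2$ with the identity $(x_{kj}^2+a_1x_{kj}+a_2)=-c_{kj}(t-t_k)(t-t_j)$, a step you gloss over but which is straightforward), and the perfect-square argument $F(t,x_\pm(t))=y_\pm(t)^2$ for the remaining tangency to $C_2$ is sound. What your approach buys is an elementary, self-contained proof with explicit formulas --- in fact the same $\lambda_\pm$ and $x$-coordinate formulas that the paper itself uses later in the deformation argument of Section~\ref{sec:deformation} --- while the paper's approach buys brevity and stays inside the Mordell-Weil framework that powers the rest of the construction. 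One correction of emphasis: reconciling the lattice picture with the degree computation is not an ``obstacle'' at all, since your computational argument is complete without it.
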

\begin{proof}
The proof is given by following through the proof of Lemma \ref{lem:wcc} backwards by starting with the data of the height pairing and intersection with the singular fibers, to obtain the geometric data of $\mcC_{s_{ik}\dot\pm s_{kj}}$. Again, see \cite{tokunaga2014, bannai-tokunaga-yamamoto2020, masuya2023} for details on similar arguments.
\end{proof}

\begin{rmk}
By combining Lemma \ref{lem:wcc}, \ref{lem:wcc2} we see that if the tangent line at $z_o$ is a simple tangent of $\mcQ$ then there are exactly two weak contact conics satisfying the conditions of Lemma \ref{lem:wcc}.
When the tangent line at $z_o$ is a bitangent, 
the union of the tangent line at $z_o$ and the line $L_{ij}$ can be considered as a singular weak contact conic. Hence, in both cases the number will be exactly two, if we allow singular weak contact conics.
\end{rmk}
Next, we consider the bitangent lines.
\begin{lemma}\label{lem:bitangent}
Let $z_o\in C_1$ be a point whose tangent line is a simple tangent of $\mcQ$. Then the 4 bitangent lines of $\mcQ$ are given by $\mcC_{s_{12}\dot\pm s_{23}\dot \pm s_{31}}$. If the tangent line at $z_o$ is a bitangent, the remaining 3 bitangents are given by $\mcC_{s_{12}\dot\pm s_{23}\dot\pm s_{31}}$ for $s_{12}\dot\pm s_{23}\dot\pm s_{31}\not=O$.
\end{lemma}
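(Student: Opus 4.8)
The plan is to adapt the Mordell--Weil/lattice method of Lemmas~\ref{lem:wcc} and~\ref{lem:wcc2} from weak contact conics to bitangent lines. A bitangent line $L$ is tangent to $C_1$ and to $C_2$, so it meets $\mcQ=C_1+C_2$ in two points each of multiplicity two; hence $F|_L$ is a perfect square up to a constant, the preimage of $L$ under the double cover $f'_{\mcQ}$ splits into two rational curves, and pulling these back through $\mu$ and $\nu_{z_o}$ yields a pair of sections $\dot\pm s_L\in\MW(S_{\mcQ,z_o})$, exactly as $L_{ij}$ yields $\dot\pm s_{ij}$. In the simple-tangent case no bitangent passes through $z_o$ (a line through $z_o$ tangent to $C_1$ would be the $z_o$-tangent, assumed not bitangent, and any other line through $z_o$ meets $C_1$ in two distinct points), so $\mcC_{s_L}$ has degree one and equals $L$.

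Next I would pin down $s_L$ in $\MW(S_{\mcQ,z_o})$ via the height pairing. Writing $s_L=a\,s_{12}\dot+b\,s_{23}\dot+c\,s_{31}$ in the orthogonal basis with $\langle s_{ij},s_{ij}\rangle=\tfrac12$, the intersection of $L$ with each $L_{ij}$ in $\PP^2$ together with the singular-fiber components that $s_L$ meets determine $(s_L\cdot O)$, the numbers $(s_L\cdot s_{ij})$ and the local contributions, hence all the pairings. The bookkeeping gives $(s_L\cdot O)=0$ and $\langle s_L,s_L\rangle=\tfrac32$, so $a^2+b^2+c^2=3$ and $a,b,c\in\{\pm1\}$; the $2$-torsion summand is fixed by recording which component of each $\I_2$ fiber $s_L$ meets, and the overall sign is irrelevant because $\mcC_s=\mcC_{[-1]s}$. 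This shows every bitangent equals some $\mcC_{s_{12}\dot\pm s_{23}\dot\pm s_{31}}$.

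For the converse and the count I would run Lemma~\ref{lem:wcc2} in reverse: from the height and fiber-intersection data of each $s_{12}\dot\pm s_{23}\dot\pm s_{31}$ reconstruct $\mcC_s$ and check it is a degree-one curve tangent to both $C_1$ and $C_2$. The four sign choices give four distinct sections---none is the negative of another, as all carry $\dot+s_{12}$---hence four distinct lines; since two smooth conics have exactly four common tangents (distinct for $\mcQ$), these four curves are precisely the bitangents. In the bitangent case the Mordell--Weil rank drops from three to two, with $s_{12},s_{31}$ spanning $\tfrac16\left(\begin{smallmatrix}2&1\\1&2\end{smallmatrix}\right)$, so $s_{12},s_{23},s_{31}$ satisfy one $\pm1$-relation and exactly one combination $s_{12}\dot\pm s_{23}\dot\pm s_{31}$ equals $O$. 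That combination is the one attached to the $z_o$-tangent: being a line through $z_o$ it is a member of $\Lambda_{z_o}$ absorbed into the special $\I_3$ fiber, so its section collapses to $O$ and $\mcC_O$ is only the point $z_o$; the three nonzero combinations give the three remaining bitangents.

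The main obstacle is the height and contribution bookkeeping that pins $s_L$ down exactly. One must compute $(s_L\cdot s_{ij})$ and the local contributions at the five $\I_2$ fibers (respectively the $\I_3$ and four $\I_2$ fibers) from the plane geometry of $L$, and argue the resulting pairing vector has a unique lattice solution once the torsion is resolved by the component data---height alone cannot separate $s_L$ from $s_L$ plus the $2$-torsion section. A secondary point is the degree check, confirming $\deg\mcC_{s_{12}\dot\pm s_{23}\dot\pm s_{31}}=1$ rather than $2$, so that these sections really give lines; and in the bitangent case verifying that it is exactly the $z_o$-tangent combination that degenerates to $O$, leaving precisely three genuine bitangents.
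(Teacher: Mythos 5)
Your proposal is correct and takes essentially the same approach as the paper: the paper's own proof is a two-line deferral ("a similar argument as Lemma~\ref{lem:wcc}," with details left to the cited reference), i.e., precisely the Mordell--Weil height-pairing argument you spell out, in which a bitangent yields a pair of sections $\dot\pm s_L$ with $s_L\cdot O=0$ and $\langle s_L,s_L\rangle=\tfrac32$, forcing coefficients $\pm1$ in the basis $s_{12},s_{23},s_{31}$, with the torsion ambiguity resolved by the fiber-component data and the rank-drop handling the bitangent case. Your expanded bookkeeping (contributions, degree check, identification of the combination that collapses to $O$) is exactly the content the paper outsources to the reference.
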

\begin{proof}
This lemma can be proved by a similar argument as Lemma \ref{lem:wcc}. 
A detailed proof can be found in \cite{masuya2023}.  
\end{proof}

Now, we apply the above arguments to an explicit example to  obtain the equations given in the introduction. Consider the curves $C_1, C_2, \mcQ=C_1+C_2$, whose affine parts are given by the equations
\begin{align*}
     C_1: x-t^2=0, && C_2: x^2-10tx+25x-36=0, && \mcQ: F:=(x-t^2)(x^2-10tx+25x-36)=0.
\end{align*}
We put  $p_1=[3, 9, 1]$, $p_2=[2,4,1]$, $p_3=[6, 36, 1]$, and $p_4=[-1, 1, 1]$.
In this case, the tangent line at $z_o=[0,1,0]$ is a simple tangent and we have $\MW(S_{\mcQ, z_o})\cong (A_1^\ast)^{\oplus 3}\oplus \ZZ/2\ZZ$.
The affine parts of the lines $L_{ij}=\overline{p_ip_j}$ are given by the equations
\begin{align*}
L_{12}: x-5t+6=0, &&  L_{23}: x-8t+12=0, &&
L_{31}: x-9t+18,
\end{align*}
which give rise to $\CC(t)$-rational points 
\begin{align*}
    P_{12}=(5t-6, -5(t-2)(t-3)), &&  P_{23}=(8t-12, -4(t-2)(t-6)), &&
    P_{31}=(9t-18, -3(t-3)(t-6)),
\end{align*} 
which correspond to the generators $s_{12}, s_{{23}}, s_{{31}}$ of the  $(A_1^\ast)^{\oplus 3}$ part of $\MW(S_{\mcQ, z_o})$. Also, the equation of the conic $C_1$ gives rise to the torsion point $T=(t^2,0)$.
We can use the Gr\"obner basis techniques described in Subsection \ref{sub-sec:addition} to compute 
the addition of the above points and obtain the following rational points $Q_0, Q^\prime_0, Q_1, \ldots, Q_4$:
\begin{align*}
    Q_0&=P_{12}\dot+P_{31}=(x_{Q_0},y_{Q_0})=\left({\frac {5\,{t}^{2}}{4}}-2\,t+3,{\frac {5\,{t}^{3}}{8}}-6\,{t}^{2}+{\frac {31\,t}{2}}-12\right)\\
    Q_0^\prime&=P_{12}\dot-P_{31}=(x_{Q_0}^\prime,y_{Q_0}^\prime)=(5t^2-32t+48, 10t^3-114t^2+392t-408)\\
Q_1&=P_{12}\dot+P_{23}\dot+P_{31}=(x_{Q_1},y_{Q_1})=\left({\frac {32\,t}{5}}-{\frac{256}{25}},{\frac {24\,{t}^{2}}{5}}-{\frac {
726\,t}{25}}+{\frac{5472}{125}}\right)
\\
Q_2&=P_{12}\dot-P_{23}\dot+P_{31}=(x_{Q_2},y_{Q_2})=(0,-6\,t)\\
Q_3&=P_{12}\dot+P_{23}\dot-P_{31}=(x_{Q_3},y_{Q_3})=(10\,t-25,6\,t-30)\\
Q_4&=P_{12}\dot-P_{23}\dot-P_{31}=(x_{Q_4},y_{Q_4})=\left({\frac {18\,t}{5}}-{\frac{81}{25}},{\frac {24\,{t}^{2}}{5}}-{\frac {
474\,t}{25}}+{\frac{2322}{125}}\right)
\end{align*}
In turn, these rational points give rise to the curves $C_3=\mcC_{Q_0}$, $C^\prime_3=\mcC_{Q^\prime_0}$,  $L_1=\mcC_{Q_1}$, $L_2=\mcC_{Q_3}$, $L_3=\mcC_{Q_3}$, and $L_4=\mcC_{Q_3}$ 
whose equations become
\begin{align*}
    C_3:&\, u_0:=x-x_{Q_0}=x-\left(\frac{5}{4}t^2-2t+3\right)=0\\
    C^\prime_3: &\, u^\prime_0:=x-x_{Q^\prime_0}=x-\left(5t^2-32t+48\right)=0\\
    L_1:&\, u_1:=x-x_{Q_1}=x-\left(\frac{32}{5}t-\frac{256}{25}\right)=0\\
    L_2:&\, u_2:=x-x_{Q_2}=x=0\\
    L_3:&\, u_3:=x-x_{Q_3}=x-(10t-25)=0\\
    L_4:&\, u_4:=x-x_{Q_4}= x-\left(\frac{18}{5}t-\frac{81}{25}\right)=0.
\end{align*}
The combinatorics of these curves (i.e., that $C_3, C_3^\prime$ are weak contact conic passing through $p_2, p_3$ and that $L_i$ $i=1, 2, 3, 4$ are bitangents of $\mcQ$), can be deduced from Lemma \ref{lem:wcc} and \ref{lem:bitangent}, but can also be checked directly. These curves give the arrangements
\begin{align*}
    \mcC_i=\mcQ+C_3+L_i \quad (i=1, 2, 3, 4)
\end{align*}
of the main theorem, with combinatorial type $\Comb(\mcC)$.

\section{The realization space}\label{sec:deformation}

In this section, we study the realization space of the conic-line arrangements having the combinatorics ${\rm Comb}(\mcC)$ (i.e., we study the the quasi-projective variety 
in $\PP {\rm H}^0(\PP^2,\mathcal{O}(7))$ consisting of closed points corresponding to curves having the combinatorics $\Comb(\mcC)$).
If two arrangements $\mcC$ and $\mcC^\prime$ in this realization space can be deformed to each other and lie in the same connected component, we denote this by
\[
\mcC \sim \mcC^\prime.
\]

The main objective of this section is to prove that the realization space has exactly two connected components. We will prove this in the following three steps:
\begin{enumerate}
\item We will prove that any arrangement $\mcC=C_1+C_2+C_3+L$ with combinatorics $\Comb(\mcC)$ can be deformed while preserving the combinatorics to an arrangement $\mcC^\prime$ for some specific choice of $C_{1}^0$ and $C_2^0$. 
\item We will choose a specific $C_1^0$, $C_2^0$ and study curves with combinatorics $\Comb(\mcC)$ for this specific choice of $C_1^0, C_2^0$, which will give all of the representatives of the connected components. 
\item We will see how the representatives in Step (2) are related, and determine the  number of connected components.
\end{enumerate}
First we will show the following Lemma and Corollary to show Step (1):

\begin{lemma}
Let $C_1^0$, $C_2^0$ be smooth conics that intersect transversally. Let $z^0\in C_1^0$ be a point such that $z^0$ is distinct from the intersection points of $C_1^0$ and $C_2^0$, and the tangent line at $z^0$ is not a bitangent. Then any arrangement
$C_1+C_2+C_3$ of smooth conics having the combinatorics of the conics in $\Comb(\mcC)$ can be deformed while preserving the combinatorics to $C_1^0+C_2^0+C_3^0$, where $C_3^0$ is a weak contact conic tangent to $C_1^0$ at $z^0$. 
\end{lemma}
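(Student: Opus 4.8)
The plan is to establish connectedness of the relevant configuration space in two stages. First I would deform the pair of conics $(C_1,C_2)$ to $(C_1^0,C_2^0)$ while dragging $C_3$ along as a weak contact conic, and then, with the pair held fixed, slide the point of tangency of $C_3$ along $C_1^0$ until it reaches $z^0$. Throughout, one works inside the locus where the combinatorics of the three conics is preserved, i.e. where the four points of $C_1\cap C_2$ stay distinct, $C_3$ stays a smooth conic tangent to each of $C_1,C_2$ at points away from these four, and the tangent line of $C_1$ at the contact point of $C_3$ stays a simple tangent of $\mcQ$ (so that $\MW(S_{\mcQ,z_o})\cong (A_1^\ast)^{\oplus 3}\oplus\ZZ/2\ZZ$ and \Lref{lem:wcc2} applies).

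For the first stage I would move the four points $C_1\cap C_2$ onto the four points $C_1^0\cap C_2^0$ by an element of $\mathrm{PGL}_3(\CC)$; since this group is connected this costs nothing topologically, and it places $C_1,C_2$ in the pencil of conics through the now-fixed four points $C_1^0\cap C_2^0$. The smooth members of this pencil form $\PP^1$ with the three singular (pair-of-lines) members removed, so the ordered pairs of distinct smooth members form a connected space; I choose a path in it from the transformed $(C_1,C_2)$ to $(C_1^0,C_2^0)$, along which the four base points never move (avoiding all relabeling issues). Over such a path the weak contact conic is carried continuously: fixing a pair $\{i,j\}$ through which $C_3$ passes and a simple-tangent base point $z_o$, the conic $\mcC_{s_{ik}\dot+s_{kj}}$ of \Lref{lem:wcc2} is produced from the configuration by the explicit Mordell--Weil addition of \Sref{sec:construction}, hence depends algebraically on $(C_1,C_2)$, and smoothly off its discriminant; by the implicit function theorem the chosen path lifts to a path of triples ending at $(C_1^0,C_2^0,\widetilde{C_3})$ for some weak contact conic $\widetilde{C_3}$ of $\mcQ^0:=C_1^0+C_2^0$, tangent to $C_1^0$ at some point $\widetilde z$.

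For the second stage I keep $(C_1^0,C_2^0)$ fixed and let the tangency point $z_o$ vary over $C_1^0\cong\PP^1$. For each $z_o$ whose tangent line is a simple tangent of $\mcQ^0$, \Lref{lem:wcc2} furnishes a smooth weak contact conic through the same pair as $\widetilde{C_3}$, tangent to $C_1^0$ exactly at $z_o$ and varying algebraically with $z_o$. Deleting from $C_1^0$ the finitely many bad points—the points of $C_1^0\cap C_2^0$, the points whose tangent is a bitangent of $\mcQ^0$, and the finitely many $z_o$ at which the candidate conic degenerates or acquires an unwanted incidence—leaves a connected (finite complement in $\PP^1$) locus containing both $\widetilde z$ and, by the hypothesis that the tangent at $z^0$ is not a bitangent, the point $z^0$ itself. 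A path from $\widetilde z$ to $z^0$ in this locus yields a deformation of $\widetilde{C_3}$ to a weak contact conic $C_3^0$ tangent to $C_1^0$ at $z^0$, completing the reduction to $C_1^0+C_2^0+C_3^0$.

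The main obstacle is the bookkeeping that keeps every deformation inside the open stratum of the correct combinatorics: I must verify that the degeneracy loci (coincidence of two of the four points, a contact point colliding with an intersection point, $C_3$ becoming singular or acquiring higher-order contact with $\mcQ$, or the base tangent turning into a bitangent) are proper closed subvarieties, so that the good locus is open with complement of real codimension at least two and the required paths can be chosen to avoid it, and that the Mordell--Weil construction genuinely varies smoothly off its discriminant so that the path-lifting is justified. The discrete ambiguities—the choice of the pair $\{i,j\}$ and of the sign in $s_{ik}\dot\pm s_{kj}$—cause no trouble here, since the statement only requires reaching some weak contact conic tangent at $z^0$, so these choices may be fixed once and for all at the outset.
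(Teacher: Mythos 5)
Your strategy is sound and reaches the right conclusion, but it is genuinely different from the paper's route, and the difference is instructive. You normalize by matching the four base points $C_1\cap C_2$ with $C_1^0\cap C_2^0$ and then move \emph{both} conics inside the pencil through those points, which forces a second stage in which the tangency point of the contact conic is slid along $C_1^0$ until it reaches $z^0$. The paper instead normalizes so that the source tangency point and the target point $z^0$ become the \emph{same} point: it chooses coordinates making $C_1$ equal to $x=t^2$ with tangency point $[0:1:0]$, and applies a projective transformation $\phi_0$ doing the same to $(C_1^0,z^0)$. After that, only $C_2$ moves, through the parameters $(t_1,t_2,t_3,t_4,[u:v])\in\CC^4\times\PP^1$ (the four intersection points sliding along the fixed $C_1$, plus the pencil parameter); the degeneracy conditions cut out a proper closed subvariety of this irreducible space, so a path avoiding them exists, \Lref{lem:wcc2} keeps the dragged $C_3$ a weak contact conic throughout, and applying $\phi_0^{-1}$ lands exactly on an arrangement tangent at $z^0$---no sliding stage is needed at all.

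The price of your route is the one real imprecision in your write-up: in stage one you assert that $\mcC_{s_{ik}\dot+s_{kj}}$ ``depends algebraically on $(C_1,C_2)$''. It does not; it depends on $(C_1,C_2,z_o)$, and since $C_1$ itself moves along your pencil path, you cannot ``fix'' the base point $z_o$---you must choose a continuously varying $z_o(s)\in C_1(s)$ that avoids, for every $s$, the four base points and the finitely many points whose tangent line is a bitangent of $C_1(s)+C_2(s)$. This is repairable: the incidence variety $\setc{(C_1,C_2,z)}{z\in C_1}$ over the pair space, with that complex-codimension-one bad locus removed, is a connected complex manifold, so paths of triples $(C_1(s),C_2(s),z_o(s))$ with prescribed initial point exist, and the Mordell--Weil construction then carries $C_3$ along as you intend. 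But this extra argument is precisely what the paper's normalization (fixed $C_1$, fixed $z_o$) is designed to make unnecessary. In exchange, your stage two makes explicit the one-parameter families of weak contact conics attached to a moving tangency point, which the paper only introduces later, in \Sref{sec:deformation}, when counting the connected components of the realization space.
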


\begin{proof}\label{lem:deformation}
To prove this lemma,  we observe that the construction of the curves given in Section \ref{sub-sec:construction} can be applied to any pair of smooth conics $C_1, C_2$ that intersect transversally. Let $z$ be the tangent  point of $C_1$ and $C_3$. We can choose coordinates so that $z=[0:1:0]$, the tangent line at $z$ is  $Z=0$, and the defining equation of the affine part of $C_1$ is given by $x-t^2=0$. 
Let $\{p_1,\ldots, p_4\}=C_1\cap C_2$, then let $(t,x)=(t_i, t_i^2)$ $(i=1,\ldots, 4)$ be the coordinates of $p_i$ $(i=1,\ldots, 4)$ respectively. The defining equations of the lines $L_{ij}$ ($\{i, j\}\subset\{1, 2, 3, 4\})$ are given by
$L_{ij}: x-(t_i+t_j) t+t_it_j=0$.
Also, because $C_2$ passes through $p_1, \ldots, p_4$, $C_2$ is a member of the pencil  generated by $L_{12}+L_{34}$ and $L_{13}+L_{24}$ (i.e., there is $[u:v]\in\PP^1$ such that $C_2$ is given by
\[u\big(x-(t_1+t_2)t+t_1t_2\big)\big(x-(t_3+t_4)t+t_3t_4\big)+v\big(x-(t_1+t_3)t+t_1t_3\big)\big(x-(t_2+t_4)t+t_2t_4\big)=0). \]
Because $C_2$ intersects with $C_1$ transversally, 
a defining equation of $C_2$ is of the form
\[
C_2: x^2+a_1(t)x+a_2(t)=0,
\]
where $a_i(t)\in \CC[t]$, $\deg_t(a_i(t))\leq i$ and the coefficients of $a_i(t)$ depend continuously on $t_1, t_2, t_3, t_4, u, v$.
Now, we have a Weierstrass equation of the form 
\[
y^2=F(t, x)=(x-t^2)(x^2+a_1(t)x+a_2(t)),
\]
As explained in Section~\ref{sec:construction}, the lines $L_{ij}$ give rise to $\CC(t)$-rational points
\[
\dot\pm P_{ij}=(x_{ij}, \pm y_{ij})=( (t_i+t_j)t-t_it_j, \pm d_{ij}(t-t_i)(t-t_j))
\]
of $E(\CC(t))$, to which we can apply the construction of Section \ref{sec:construction}. Suppose that $C_3$ passes through $p_i, p_j$ $(i\not=j)$. Then by Lemma \ref{lem:wcc}, $C_3$ will become either $\mcC_{s_{ik}\dot+ s_{kj}}$ or $\mcC_{s_{ik}\dot-s_{kj}}$. The $x$-coordinate of $P_{ik}\dot \pm P_{kj}$ is given by
\[
\lambda_{\pm}^2-(a_1(t)-t^2)-x_{ik}-x_{kj},
\]
where 
\[\lambda_{\pm}=\frac{\pm y_{kj}-y_{ik}}{x_{kj}-x_{ik}}=\frac{(d_{ik}\mp d_{kj})t-d_{ik}t_i\pm d_{kj}t_j}{t_i-t_j}.\]
Now we see that the defining equation of $C_3$   given by
\[
C_3: x-\left(\lambda_{\pm}^2-(a_1(t)-t^2)-x_{ik}-x_{kj}\right)=0
\]
depends continuously on $t_1, t_2, t_3, t_4, u, v$, and that we can deform $C_3$ as we deform $C_2$. By Lemma \ref{lem:wcc2}, the deformation of $C_3$ is a weak contact conic as long as the line $Z=0$ is not a bitangent of $C_1$ and $C_2$. 

On the other hand, given $C_1^0, C_2^0$ and $z^0\in C_1^0$, there exists a projective transformation $\phi_0$ such that 
$\phi_0(z^0)=[0:1:0]\in \PP^2$, $\phi_0(C_1^0)=C_1$ and the tangent line at $z^0$ is transformed to $Z=0$. By the assumption on $z^0$, we see that $\phi_0(C_2^0)$ is not tangent to $Z=0$. Because the condition for the deformation of $C_2$  to be tangent to $Z=0$  is a closed condition for $(t_1, t_2, t_3, t_4, [u,v])\in \CC^4\times \PP^1$,  we can deform $C_2$ to $\phi_0(C_2^0)$ while not being tangent to $Z=0$. The composition of this deformation and $\phi_0^{-1}$  gives the desired deformation from $C_1+C_2+C_3$ to $C_1^0+C_2^0+C_3^0$. 
\end{proof}

\begin{rmk}\label{rmk:choice_of_points}
Note that when we deform $C_2$ to $\phi_0(C_2^0)$, 
we can choose freely to which points $\{p_1^0, p_2^0, p_3^0, p_4^0\}=\phi_0(C_1)\cap \phi_0(C_2)$ the points  $\{p_1, p_2, p_3, p_4\}$ will be deformed. 
\end{rmk}

\begin{cor}\label{cor:deformation}
Let $C_1+C_2+C_3+L$ be an arrangement with the combinatorics $\Comb(\mcC)$. Let $C_1^0$ and $C_2^0$ be smooth conics  intersecting transversally. Assume that there exists a point $z^0\in C_1^0$ such that every weak contact conic of $C_1^0+C_2^0$ tangent to $C_1^0$ at $z^0$ is not tangent to a bitangent line of $C_1^0+C_2^0$.
Then $C_1+C_2+C_3+L$ can be deformed while preserving the combinatorics to an arrangement $C_1^0+C_2^0+C_3^0+L^0$ having the combinatorics $\Comb(\mcC)$. 
\end{cor}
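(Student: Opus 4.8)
The plan is to run the deformation of the conic part supplied by the preceding Lemma and to carry the bitangent line $L$ along with it, the only genuinely new point being to keep $L$ transverse to $C_3$ throughout. First I would check that the hypothesis on $z^0$ makes the preceding Lemma applicable. Any weak contact conic of $C_1^0+C_2^0$ tangent to $C_1^0$ at $z^0$ — one exists by \Lref{lem:wcc2} — shares with $C_1^0$ the tangent line $\ell$ at $z^0$. If $\ell$ were a bitangent of $C_1^0+C_2^0$, that conic would be tangent to the bitangent $\ell$ at $z^0$, contradicting the hypothesis; hence $\ell$ is not a bitangent, and the preceding Lemma deforms $C_1+C_2+C_3$, preserving the combinatorics of the conic part, to $C_1^0+C_2^0+C_3^0$ with $C_3^0$ a weak contact conic tangent to $C_1^0$ at $z^0$.

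Next I would track the bitangent $L$ along this deformation. In the normalized coordinates of the preceding proof, with $C_1$ fixed as $x-t^2=0$ and $C_2$ moving in the pencil parametrized by $(t_1,t_2,t_3,t_4,[u:v])$, the bitangents are the curves $\mcC_{s_{12}\dot\pm s_{23}\dot\pm s_{31}}$ by \Lref{lem:bitangent}, and these depend algebraically on the parameters. Thus $L$ deforms continuously to one of the bitangents $L^0$ of $C_1^0+C_2^0$, and by the hypothesis on $z^0$ every such $L^0$ meets $C_3^0$ transversally. In particular the endpoint $C_1^0+C_2^0+C_3^0+L^0$ already realizes $\Comb(\mcC)$; the only thing left to secure is that $L$ and $C_3$ stay transverse at every intermediate stage.

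For this I would argue by complex dimension, which is where I expect the only real difficulty to lie. Let $\mathcal{P}$ be the parameter space of conic configurations carrying the conic combinatorics of $\Comb(\mcC)$, together with the continuously varying branches of $L$ and $C_3$ selected above; over the locus where that combinatorics holds the branches stay distinct and unramified, so $\mathcal{P}$ is a smooth (rational) variety, and the lift of the deformation path places the start and end in one connected, hence irreducible, component $\mathcal{P}_0$. The locus $Z\subset\mathcal{P}_0$ on which $L$ is tangent to $C_3$ is the vanishing of the associated discriminant, hence a closed subvariety, and it is proper precisely because the starting configuration has combinatorics $\Comb(\mcC)$ and so lies off $Z$ (this properness — equivalently, that $L$ is not identically tangent to $C_3$ along $\mathcal{P}_0$ — is the point to verify with care). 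Granting it, the decisive feature is that over $\CC$ a proper closed subvariety has real codimension at least $2$: unlike over $\RR$ one never has to cross $Z$, so $\mathcal{P}_0\setminus Z$ stays path-connected and the two transverse endpoints are joined by a path there. Transporting this path back through the coordinate normalization and the projective transformation $\phi_0^{-1}$ of the preceding proof yields a deformation of $C_1+C_2+C_3+L$ to $C_1^0+C_2^0+C_3^0+L^0$ preserving $\Comb(\mcC)$.
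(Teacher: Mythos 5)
Your proposal is correct and follows essentially the same route as the paper's proof: carry the bitangent along the deformation of Lemma~\ref{lem:deformation} (using its algebraic dependence on the parameters via Lemma~\ref{lem:bitangent}), observe that tangency between the deformed $C_3$ and the deformed bitangent is a proper Zariski-closed condition on the irreducible parameter space $(t_1,t_2,t_3,t_4,[u:v])\in\CC^4\times\PP^1$, and conclude by the fact that over $\CC$ such a locus has real codimension at least $2$ and therefore cannot disconnect the complement. Your explicit verification that the hypothesis on $z^0$ forces the tangent line at $z^0$ to not be a bitangent (so that Lemma~\ref{lem:deformation} is applicable) is a detail the paper leaves implicit, but it does not constitute a different approach.
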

\begin{proof}
The fact that we can deform the bitangent lines $L_i$ ($i=1, 2, 3, 4$) as we deform $C_2$ as in  Lemma \ref{lem:deformation} can be seen by a similar argument. (Alternatively, 
we can consider the dual curves $C_1^\ast$, $C_2^\ast$ of $C_1$, $C_2$ respectively and observe that $C_2^\ast$ depends continuously on $C_2$ so that the four intersection points of $C_1^\ast$ and $C_2^\ast$ which correspond to the bitangent lines depend continuously on $C_2$.) We apply the deformation of Lemma \ref{lem:deformation} to deform $C_1+C_2+C_3$ to $C_1^0+C_2^0+C_3^0$. Since the coefficients of the defining equations of the deformation of $C_3$ and the bitangents are given in terms of $t_1, t_2, t_3, t_4, u, v$, the condition for a deformation of $C_3$ to be tangent to a deformation of a bitangent $L_i$ is a closed condition in  $(t_1, t_2, t_3, t_4, [u,v])\in \CC^4\times \PP^1$, so we can deform $C_2$ to $\phi_0(C_2^0)$ as in Lemma \ref{lem:deformation} while preserving the combinatorics.
\end{proof}

Next, for step (2), we choose a specific $C_1, C_2$ and use it to analyze the entire representation space. Let 
\begin{align*}
C_1: t^2+x^2+tx-\frac{27}{4}=0 && C_2: t^2+x^2-tx-\frac{27}{4}=0 && \mcQ=C_1+C_2.
\end{align*}
The bitangent lines of $\mcQ$ are $L_1: t=3$, $L_2: t=-3$, $L_3: x=3$, and $L_4: x=-3$. Let, $C_1\cap C_2=\{p_1, p_2, p_3, p_4\}$ and $p_1=[0, \frac{3}{2}\sqrt{3},1]$, $p_2=[-\frac{3}{2}\sqrt{3}, 0,1]$, $p_3=[\frac{3}{2}\sqrt{3}, 0,1]$, and $p_4=[0, -\frac{3}{2}\sqrt{3},1]$. For each point $z_a\in C_1$, there exist two weak contact conics $C_{3, a}$ and $C^\prime_{3, a}$ passing through $p_2, p_3$ and are tangent to $C_1$ at $z_a$ by Lemma \ref{lem:wcc}. These curves are obtained from  the rational points $P_{12}\dot+P_{31}$ and  $P_{12}\dot-P_{31}$. Hence, $C_1+C_2$ has two families $\{C_{3, a}\}$ and $\{C^\prime_{3, a}\}$ of weak contact conics passing through $p_2$ and $p_3$. Under the parametrization $z_a=\left(-\frac{3(a^2+4a+1)}{2(a^2+a+1)}, -\frac{3(a^2-2a-2)}{2(a^2+a+1)} \right)$, the defining equations of the members of the families are given by
\begin{align*}
C_{3, a}: &\left( 4\,{a}^{2}+8\,a \right) {t}^{2}+ \left( 8\,a+4 \right) {x}^{2}
+ \left( -12\,{a}^{2}-12\,a-12 \right) x-27\,{a}^{2}-54\,a=0,
\\
C^\prime_{3, a}: &\left( 4\,{a}^{2}-4 \right) {t}^{2}+ \left( -4\,{a}^{2}-16\,a-4
 \right) tx+ \left( -4\,{a}^{2}+4 \right) {x}^{2}+ \left( -24\,{a}^{2}
-24\,a-24 \right) x-27\,{a}^{2}+27=0.
\end{align*}  
Note that when $a=-2, -1, 0, 1$, the tangent line at $z_a$ is a bitangent, and the curves $C_{3,a}$ ($a=-2, 0$) and  $C^\prime_{3, a}$ ($a=-1,1$) degenerate to a union of $L_{23}$ given by $x=0$ and some bitangent line. Also, when $a=-2\pm\sqrt{3}, 1\pm\sqrt{3}$, $z_a$ coincides with one of $p_1, p_2, p_3, p_4$ and the combinatorics will become degenerated. 

It can be easily checked that $C_1+C_2$ satisfies the conditions for $C_1^0, C_2^0$ in Corollary \ref{cor:deformation}. Hence, by Corollary \ref{cor:deformation} and Remark \ref{rmk:choice_of_points}, any arrangement  with the combinatorics ${\rm Comb}(\mcC)$ can be deformed to a curve of the form $C_1+C_2+C_{3, a}+L_i$ or $C_1+C_2+C^\prime_{3, a}+L_i$ for some $a\in \CC$ and $i=1, 2, 3, 4$ for this specific choice of $C_1$ and $C_2$.

Finally, for step (3), we consider the relation of the above curves with combinatorics $\Comb(\mcC)$. Because 
$\{C_{3, a}\}$ and $\{C^\prime_{3, a}\}$ are connected families, 
we have
\begin{align*}
C_1+C_2+C_{3,a}+L_i \sim C_1+C_2+C_{3,a^\prime}+L_i,\\
C_1+C_2+C^\prime_{3,a}+L_i \sim C_1+C_2+C^\prime_{3,a^\prime}+L_i
\end{align*}
for any $i=1, 2, 3, 4$ and $a, a^\prime\in \CC$, such that the arrangement has combinatorics $\Comb(\mcC)$. The arguments so far show that we have $8$ representatives of connected components and that the number of connected components is less than or equal to $8$. 

Next, to study the relation of the above $8$ possibilities, we consider a deformation of $C_2$. Let $b$ be a parameter and $C_{2, b}$ be a conic defined by
\begin{align*}
C_{2, b}: &-27\,{b}^{4}-54\,{b}^{3}-81\,{b}^{2}-54\,b-27+ \left( 8\,{b}^{4}+16\,{
b}^{3}-24\,{b}^{2}-32\,b-4 \right) tx\\
&+ \left( 4\,{b}^{4}+8\,{b}^{3}+12
\,{b}^{2}+8\,b+4 \right) {t}^{2}+ \left( 4\,{b}^{4}+8\,{b}^{3}+12\,{b}
^{2}+8\,b+4 \right) {x}^{2}=0.
\end{align*}
Here, $C_{2, b}$ passes through $p_1, p_2, p_3, p_4$ and furthermore, $C_{2,b}=C_2$ for $b=-2, -1, 0, 1$.
Also, $C_{2, b}=C_1$ if $(b^2+4b+1)(b^2-2b-2)=0$ and $C_{2, b}$ is singular if $b^2+b+1=0$. When $(b^2+4b+1)(b^2-2b-2)(b^2+b+1)\not=0$, 
the curve $\mcQ_b:=C_1+C_{2, b}$ has the following bitangents $L_{1, b}, \ldots, L_{4, b}$, and a weak contact conic $D_{3, b}$ passing through $p_2, p_3$:
\begin{align*}
L_{1, b}: &  \left( {b}^{2}+2\,b \right) t+ \left( {b}^{2}-1 \right) x+3\,{b}^{2}+
3\,b+3=0 \\
L_{2, b}: & \left( {b}^{2}+2\,b \right) t+ \left( {b}^{2}-1 \right) x-3\,{b}^{2}-
3\,b-3=0 \\
L_{3, b}: &  \left( {b}^{2}-1 \right) t+ \left( {b}^{2}+2\,b \right) x+3\,{b}^{2}+
3\,b+3=0 \\
L_{4, b}: &  \left( {b}^{2}-1 \right) t+ \left( {b}^{2}+2\,b \right) x-3\,{b}^{2}-
3\,b-3=0 \\
D_{3, b}: &  \left( -20\,{b}^{2}+24\,b+32 \right) {t}^{2}+ \left( -52\,{b}^{2}-104
\,b \right)tx \\
&+ \left( -32\,{b}^{2}-24\,b+20 \right) {x}^{2}+ \left( -
84\,{b}^{2}-336\,b-84 \right) x+135\,{b}^{2}-162\,b-216=0
\end{align*}
Here, $D_{3, b}$ is singular if $b=2, 1\pm\sqrt{3}, -\frac{4}{5}$.
For $b=-2, -1, 0, 1$, we have the following Table \ref{table:curves}:

\begin{table}[h]
\centering
\begin{tabular}{|c||c|c|c|c|}
\hline
$b$ & $-2$ & $-1$ & $0$ & $1$   \\
\hline
$L_{1,b}$ & $L_2$   & $L_3$ & $L_1$& $L_4$\\
\hline
$L_{2, b}$ & $L_1$ & $L_4$ & $L_2$& $L_3$\\
\hline
$L_{3, b}$ & $L_4$ & $L_1$ & $L_3$& $L_2$\\
\hline
$L_{4, b}$ & $L_3$ & $L_2$  & $L_4$& $L_4$\\
\hline
$D_{3, b}$ & $C_{3,2}$ & $C^\prime_{3,2}$ & $C_{3, 2}$ & $C^\prime_{3, 2}$.\\
\hline
\end{tabular}
\caption{The curves given by $L_{i, b}$ and $D_{3, b}$.}
\label{table:curves}
\end{table}
Note that the equations of the curves may vary by a constant, this means that $L_1=L_{1, 0}=L_{2, -2}$ as curves but the equations of $L_{1, 0}$, $L_{2, -2}$ given by $b=0, -2$ respectively differ by a constant. 
By considering $C_1+C_{2, b}+D_{3, b}+L_{1, b}$ and $C_1+C_{2, b}+D_{3, b}+L_{3, b}$ for $b=0,-2, -1, 1$  we see that 
\begin{align*}
C_1+C_2+C_{3, 2}+L_1 \sim C_1+C_2+C_{3, 2}+L_2\sim C_1+C_2+C^\prime_{3, 2}+L_3 \sim C_1+C_2+C^\prime_{3, 2}+L_4,\\
C_1+C_2+C_{3, 2}+L_3 \sim C_1+C_2+C^\prime_{3, 2}+L_1\sim C_1+C_2+C_{3, 2}+L_4 \sim C_1+C_2+C^\prime_{3, 2}+L_2
\end{align*}
because we can deform while avoiding the finite number of exceptional values of $b$ where the combinatorics become  degenerated. Therefore, the deformation space has at most two connected components. On the other hand, we will see in the next section that there exists a Zariski pair of arrangements with combinatorics ${\rm Comb}(\mcC)$ whose curves cannot be in the same component. Hence,   the number of connected components of the deformation space must be exactly two.

\section{Splitting types}\label{sec:splitting}

In this section, we give two methods to compute the splitting types of the triples $(C_3,L_i; \mcQ)$ ($i=1,\dots,4$) with respect to the double cover $f'_\mcQ:S'_\mcQ\to\PP^2$. The first is conceptual in nature, where we only need the data of the combinatorics of sections $s\in\MW(S_{\mcQ, z_o})$ and corresponding curves $\mcC_s$, to calculate the splitting type. The second is more computational, where we consider explicit equations coming from the coordinates of corresponding rational points $P_s\in E_{\mcQ}(\CC(t))$ and use Gr\"obner basis techniques. Both methods have advantages and disadvantages, and we believe it worthwhile to present both methods.

The definition of the splitting type is as follows:

\begin{Def}[{\cite{bannai2016}}]
Let $\phi:X\to\PP^2$ be a double cover branched at a plane curve $\mcB$, and
let $D_1, D_2\subset\PP^2$ be two irreducible curves such that $\phi^\ast D_i$ are reducible and $\phi^\ast D_i=D_i^++D_i^-$.
For integers $m_1\leq m_2$, we say that the triple $(D_1, D_2;\mcB)$ has a {\it splitting type} $(m_1, m_2)$ if for a suitable choice of labels $D_1^+\cdot D_2^+=m_1$ and $D_1^+\cdot D_2^-=m_2$.
\end{Def}

The following proposition enables us to distinguish the embedded topology of plane curves by the splitting type.

\begin{prop}[{\cite[Proposition~2.5]{bannai2016}}]
Let $\phi_i:X_i\to\PP^2$ $(i=1,2)$ be two double covers branched along plane curves $\mcB_i$, respectively.
For each $i=1,2$, let $D_{i1}$ and $D_{i2}$ be two irreducible plane curves such that $\phi_i^\ast D_{ij}$ are reducible and $\phi_i^\ast D_{ij}=D_{ij}^++D_{ij}^-$.
Suppose that $D_{i1}\cap D_{i2}\cap \mcB_i=\emptyset$, $D_{i1}$ and $D_{i2}$ intersect transversally, and that $(D_{11},D_{12};\mcB_1)$ and $(D_{21}, D_{22};\mcB_2)$ have distinct splitting types.
Then there is no homeomorphism $h:\PP^2\to\PP^2$ such that $h(\mcB_1)=\mcB_2$ and $\{h(D_{11}), h(D_{12})\}=\{D_{21}, D_{22}\}$.
\end{prop}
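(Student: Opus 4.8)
The plan is to show that the splitting type is a \emph{topological} invariant of the data $(\mcB; D_1, D_2)$: a homeomorphism as in the statement would force the two triples to have equal splitting types, contradicting the hypothesis that they are distinct. I therefore prove the contrapositive, assuming a homeomorphism $h\colon\PP^2\to\PP^2$ with $h(\mcB_1)=\mcB_2$ and $\{h(D_{11}),h(D_{12})\}=\{D_{21},D_{22}\}$ exists, and deducing that $(D_{11},D_{12};\mcB_1)$ and $(D_{21},D_{22};\mcB_2)$ have the same splitting type.

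The first step is to lift $h$ to the double covers. The unramified restriction $\phi_i\colon X_i\setminus\phi_i^{-1}(\mcB_i)\to\PP^2\setminus\mcB_i$ is the connected double cover classified by the homomorphism $\pi_1(\PP^2\setminus\mcB_i)\to\ZZ/2\ZZ$ sending every meridian of $\mcB_i$ to the generator (the local equation $w^2=(\text{eq.\ of }\mcB_i)$ shows the monodromy around each meridian is $w\mapsto -w$; this homomorphism is well defined because $\deg\mcB_i$ is even, and is pinned down by the meridian condition). A homeomorphism of pairs $(\PP^2,\mcB_1)\to(\PP^2,\mcB_2)$ carries meridians to meridians, so the induced isomorphism on fundamental groups intertwines these two classifying homomorphisms. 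Hence $h$ lifts to a homeomorphism of the unramified covers, which extends across the ramification locus to a homeomorphism $\tilde h\colon X_1\to X_2$ satisfying $\phi_2\circ\tilde h=h\circ\phi_1$ and intertwining the covering involutions $\sigma_1,\sigma_2$.

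The second step tracks components and intersection numbers. From $\phi_2\circ\tilde h=h\circ\phi_1$ we get $\tilde h(\phi_1^{-1}(D_{1j}))=\phi_2^{-1}(h(D_{1j}))$, and as $\tilde h$ is a homeomorphism it maps the two components $D_{1j}^\pm$ onto the two components of $\phi_2^{-1}(h(D_{1j}))$. By the hypothesis $\{h(D_{11}),h(D_{12})\}=\{D_{21},D_{22}\}$, the four curves $D_{11}^\pm,D_{12}^\pm$ are carried bijectively onto $D_{21}^\pm,D_{22}^\pm$. Because $D_{i1}\cap D_{i2}\cap\mcB_i=\emptyset$ and $D_{i1},D_{i2}$ meet transversally, every intersection point of the upstairs curves lies over the locus where $\phi_i$ is a local homeomorphism; thus each number $D_{i1}^\epsilon\cdot D_{i2}^\delta$ equals the count of transverse intersection points of the corresponding curves, a quantity preserved by $\tilde h$. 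Therefore the multiset $\{D_{11}^\epsilon\cdot D_{12}^\delta\}_{\epsilon,\delta}$ equals $\{D_{21}^\epsilon\cdot D_{22}^\delta\}_{\epsilon,\delta}$.

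Finally, since $\sigma_i$ interchanges $D_{ij}^+$ and $D_{ij}^-$ and preserves intersection numbers, one has $D_{i1}^+\cdot D_{i2}^+=D_{i1}^-\cdot D_{i2}^-$ and $D_{i1}^+\cdot D_{i2}^-=D_{i1}^-\cdot D_{i2}^+$, so each multiset above is $\{m_1,m_1,m_2,m_2\}$ where $(m_1,m_2)$ is the splitting type; equality of the multisets then gives equality of the two splitting types, the desired contradiction. I expect the main obstacle to be the first step: rigorously justifying that $h$ lifts and extends across the ramification (and across the possible singularities of $X_i$ lying over singular points of $\mcB_i$), which forces one to use the purely topological characterization of the branched cover via the meridian homomorphism rather than its algebraic construction, and to check that the extension remains a homeomorphism intertwining the involutions.
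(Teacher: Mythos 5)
The paper itself gives no proof of this proposition---it is quoted verbatim from \cite[Proposition~2.5]{bannai2016}---and your blind argument is a correct reconstruction of essentially the standard (and original) one: lift $h$ to a homeomorphism of the double covers, using that the restriction of $\phi_i$ over $\PP^2\setminus\mcB_i$ is the cover classified by the homomorphism $\pi_1(\PP^2\setminus\mcB_i)\to\ZZ/2\ZZ$ sending every meridian of $\mcB_i$ to the generator, then extend over the branch locus and compare intersection numbers of the lifted components. Your treatment of the two delicate points is sound: the lift exists because a homeomorphism of pairs intertwines the meridian homomorphisms, and since $D_{i1}\cap D_{i2}\cap\mcB_i=\emptyset$ with transverse intersections, every number $D_{i1}^{\epsilon}\cdot D_{i2}^{\delta}$ is a literal count of points preserved by the lifted homeomorphism, so the multiset $\{m_1,m_1,m_2,m_2\}$, and hence the splitting type, is a topological invariant.
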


\subsection{Conceptual calculation}
In this subsection, we describe how to compute the splitting types of the curves in the main theorem by using the height pairing of elliptic surfaces. By the construction of the curves given in Subsection \ref{sub-sec:construction}, for $s_{Q_i}, i=0, 1, \ldots, 4$ we have
\begin{gather*}
    (f^\prime_{\mcQ})^{-1}(C_{{Q_i}})=C_{Q_i}^++C_{Q_i}^-,\quad C^\pm_{Q_i}=\mu\circ\nu_{z_o}(s_{[\pm1]Q_i})
\end{gather*}
and because the curves $C_3, L_1, L_2, L_3, L_4$ do not intersect at singular points of $\mcQ$ or at $z_o$, it is enough to compute the intersection numbers $[\pm1]s_{Q_0}\cdot[\pm1] s_{Q_j}$ in $S_{\mcQ, z_o}$  to obtain the necessary splitting types.
Similar calculations have been done in \cite{bannai2016}, \cite{bannai_tokunaga2021} and we refer the reader to these papers for details.

First, we recall the following explicit formula of the height pairing for 
{{$P_1, P_2 \in E_{\mcQ, z_o}(\CC(t))$}}:
{\[
\langle P_1, P_2\rangle=\chi(S)+ s_{P_1}\cdot O+s_{P_2}\cdot O-s_{P_1}\cdot s_{P_2}-\sum_{v\in{\rm Red}(\varphi)} {\rm Contr}_v(P_1,P_2)
\]}
where 
{{\[
\mbox{Contr}_v(P_1, P_2) = {}^t\bm{c}(v, s_{P_1})(-A_v)^{-1}
\bm{c}(v,s_{P_2}),
\]}}
and
\[          
 \bm{c}(v, s) :=\left(
 \begin{array}{c}
 s\cdot\Theta_{v,1}\\
 \vdots \\
   s\cdot \Theta_{v,m_v-1}
 \end{array} \right)
\]
for $s \in \MW(S_{\mcQ,z_o})$.
Explicit values for {{${\rm Contr}_v(P_1, P_2)$}} can be found in \cite{shioda90}. In our case, $\chi(S_{\mcQ, z_o})=1$ and the values of  {{$s_{Q_i}\cdot O$}},  and  {{$\Contr_v(Q_i, Q_j)$}} for the sections that we use can be calculated from the geometric data that can be read off  from the construction of the elliptic surface $S_{\mcQ, z_o}$. Hence, it is enough to know the value of the height pairing {{$\langle [\pm 1]Q_0, [\pm 1]Q_i\rangle$}} to calculate $[\pm1]s_{Q_0}\cdot [\pm1]s_{Q_i}$.

Let $F_1, \ldots, F_4, F_{\infty}$ be the five singular fibers of type $\I_2$ of $S_{\mcQ, z_o}$, where $F_i$ corresponds to the line $\overline{z_op_i}$ ($i=1,2,3,4$) and $F_\infty$ corresponds to the tangent line of $C_1$ at $z_o$. We will label the components as $F_i=\Theta_{i,0}+\Theta_{i,1}$, where $\Theta_{i,0}\cdot O=1$ ($i=1,2,3,4, \infty$),  $\Theta_{i,0}$ is the strict transform of  $\overline{z_op_i}$ $(i=1, 2, 3, 4)$ and $\Theta_{\infty,1}$ is the strict transform of the tangent line at $z_o$. 

From the construction, we have 
\begin{gather*}
    s_{Q_i}\cdot O=0 \quad (i=0,1,2,3,4)\\
    s_{Q_0}\cdot \Theta_{\infty,0}=   s_{Q_0}\cdot\Theta_{1,0}=s_{Q_0}\cdot\Theta_{2,1}=s_{Q_0}\cdot\Theta_{3,1}=s_{Q_0}\cdot\Theta_{4,0}=1\\
    s_{Q_i}\cdot \Theta_{j,0}=s_{Q_i}\cdot\Theta_{\infty,1}=1, (i, j=1,2,3,4).
\end{gather*}
These values give
\[
{{\Contr_{v_k}(Q_o, Q_i)=0}},\quad (k=0,1,\ldots, 4, i=1,\ldots, 4),
\]
where $v_k$ corresponds to $F_k$, $k=1,\ldots, 4, \infty$.
Also, because $\MW(S_{\mcQ, z_o})\cong (A_2^\ast)^{\oplus 3}\oplus\ZZ/2\ZZ$, $s_{P_{12}}$, $s_{P_{23}}$, and  $s_{P_{31}}$ are generators of the $(A_1^\ast)^{\oplus 3}$,  
we have
\begin{align*}
    \langle Q_0, Q_1\rangle=\langle Q_0, Q_2\rangle=1\\
    \langle Q_0, Q_3\rangle=\langle Q_0, Q_4\rangle=0.
\end{align*}
Hence, by substituting these values into the explicit formula of the height pairing, we obtain
\begin{align*}
    s_{Q_0}\cdot s_{Q_1}=s_{Q_0}\cdot s_{Q_2}=0 \\
    s_{Q_0}\cdot s_{Q_3}=s_{Q_0}\cdot s_{Q_4}=1,
\end{align*}
which shows:
\begin{itemize}
\item  $(C_3, L_1; \mcQ)$ and $(C_3, L_2; \mcQ)$ have splitting type $(0,2)$, 
\item $(C_3, L_3; \mcQ)$ and $(C_3, L_4; \mcQ)$ have splitting type $(1,1)$. 
\end{itemize}

\subsection{Computational verification}

In this subsection, we use direct computation to verify the splitting types computed in the previous subsection.

Let $f_{\mcQ}':S_{\mcQ}'\to \PP^2$ be the double cover branched at $\mcQ:=C_1+C_2$ as Subsection~\ref{sub-sec:construction}. 
Over the affine open set $\{Z\ne0\}\subset\PP^2$, the double cover $S_{\mcQ}'$ is locally defined by $y^2=F$ in $\CC^3$, where $(t,x,y)$ is a system of coordinates of $\CC^3$. 
Because $C_3$ and $L_i$ are rational curves that are tangent to the branch locus $\mcQ$, the pull-backs $(f'_\mcQ)^\ast C_3$ and $(f'_\mcQ)^\ast L_i$ consist of the components  
\[ (f'_\mcQ)^\ast C_3=C_3^++C_3^-, \qquad (f'_\mcQ)^\ast L_i=L_i^++L_i^-. \] 

We compute the splitting types of $(C_3,L_i; \mcQ)$. 
Because $C_3$ and $L_i$ intersect transversally in the affine open $\{Z\ne0\}$, 
it is enough to compute the number of intersection points of $C_3^+$ and $L_i^\pm$ over $\{Z\ne 0\}$. 
By Proposition~\ref{prop:mumford} and the coordinates of $Q_i=(x_{Q_i},y_{Q_i})$ in Subsection~\ref{sub-sec:construction}, the defining ideals $\widetilde{I(C_3^\pm)}$ and $\widetilde{I(L_i^\pm)}$ of $C_3^\pm$ and $L_i^\pm$ as subvarieties in $\CC^3$ are as follows:
\begin{align*}
    \widetilde{I(C_3^\pm)}&=\langle x-x_{Q_0},y\mp y_{Q_0}\rangle =\left\langle x-\left({\frac {5\,{t}^{2}}{4}}-2\,t+3\right), y\mp \left({\frac {5\,{t}^{3}}{8}}-6\,{t}^{2}+{
\frac {31\,t}{2}}-12\right)\right\rangle\\
\widetilde{I(L_1^\pm)}&=\langle x-x_{Q_1},y\mp y_{Q_1}\rangle=\left\langle x-\left({\frac {32\,t}{5}}-{\frac{256}{25}}\right), y\mp \left({\frac {24\,{t}^{2}}{5}}-{\frac {
726\,t}{25}}+{\frac{5472}{125}}\right)\right\rangle
\\
\widetilde{I(L_2^\pm)}&=\langle x-x_{Q_2},y\mp y_{Q_2}\rangle=\langle x,y\pm 6\,t\rangle\\
\widetilde{I(L_3^\pm)}&=\langle x-x_{Q_3},y\mp y_{Q_3}\rangle=\langle x-(10\,t-25), y \mp (6\,t-30)\rangle\\
\widetilde{I(L_4^\pm)}&=\langle x-x_{Q_4},y \mp y_{Q_4}\rangle=\left\langle x-\left({\frac {18\,t}{5}}-{\frac{81}{25}}\right),y\mp \left({\frac {24\,{t}^{2}}{5}}-{\frac {
474\,t}{25}}+{\frac{2322}{125}}\right)\right\rangle
\end{align*}

The set of intersection points of $C_3^+$ and $L_i^\pm$ is defined by the ideal 
\[ I_i^\pm=\langle x-x_{Q_0}, y+y_{Q_0}, x-x_{Q_i}, y\mp y_{Q_i} \rangle\subset\CC[t,x,y] \qquad (i=1,\dots,4). \]
Because $I_i^\pm$ are zero-dimensional ideals, the number of intersection points of $C_3^+$ and $L_i^\pm$ is equal to the dimension of the $\CC$-vector space $\CC[t,x,y]/I_i^\pm$ for each $i=1,\dots,4$. 
A Gr\"obner basis $G_i^\pm$ of each $I_i^\pm$ with respect to the lex order with $x>y>t$ is as follows;
\begin{align*}
	G_1^+&=\{1\}, 
	\\
	G_1^-&=\{125 t^{2}-840 t +1324, \ 625 y +2010 t -4416, \ 25 x -160 t +256\};
		\\[0.5em]
	G_2^+&=\{1\};
	\\
	G_2^-&=\{5 t^{2}-8 t +12, \ y -6 t, \ x\}, 
		\\[0.5em]
	G_3^+&=\{ t -4, \ y +6, \ x -15\},
	\\
	G_3^-&=\{ 5 t -28, \ 5 y +18, \ x -31 \};
	\\[0.5em]
	G_4^+&=\{ 25 t -52, \ 3125 y +294, \ 125 x -531 \},
	\\
	G_4^-&=\{ 5 t -12, 25 y +18, 5 x -27 \}.
\end{align*}
Therefore the splitting types are as follows;
\begin{itemize}
\item  $(C_3, L_1; \mcQ)$ and $(C_3, L_2; \mcQ)$ have splitting type $(0,2)$, 
\item $(C_3, L_3; \mcQ)$ and $(C_3, L_4; \mcQ)$ have splitting type $(1,1)$. 
\end{itemize}

\section{Fundamental Groups}\label{sec:fundamental-groups}

In the above sections, we have studied the embedded topology of the four conic-line arrangements $\mcC_i$ (i=1,2,3,4) of degree $7$ consisting of $C_1, C_2, C_3$ and $L_i$ each.

As stated in the introduction, the fundamental group $\pi_1(\mathbb{P}^2 \setminus{\mcC})$ of the complement of a plane curve $\mcC \subset \PP^2$  has been used to study the embedded topology of plane curves. We can understand whether  two plane curves $({\mcC}_1, {\mcC}_2)$ with the same combinatorics form a Zariski pair with $\pi_1(\mathbb{P}^2 \setminus {\mcC}_1) \ncong \pi_1(\mathbb{P}^2 \setminus {\mcC}_2)$. Contrary to that, in a case where $\pi_1(\mathbb{P}^2 \setminus {\mcC}_1) \cong \pi_1(\mathbb{P}^2 \setminus {\mcC}_2)$, the curves ${\mcC}_1$ and ${\mcC}_2$ are called $\pi_1$-equivalent Zariski pairs.

In this section, we conclude the proof of Theorem \ref{thm:main}. We study two curves, denoted as  ${\mcC}_i$ ($i\in \{1,3\}$), each of which is of degree $7$ with three smooth conics $C_1, C_2, C_3$,  and line $L_i$ ($i\in \{1,3\}$); in each curve, line $L_i$ is tangent to conics $C_1, C_2$ and intersects conic $C_3$. We note that the curves ${\mcC}_1$ and ${\mcC}_2$ lie in the same connected component
and so do ${\mcC}_3$ and ${\mcC}_4$. Therefore we do not calculate the fundamental groups associated to  ${\mcC}_2$ and ${\mcC}_4$.
In Subsections \ref{sec:C1} and \ref{sec:C3} we determine the fundamental groups  $\pi_1(\mathbb{P}^2 \setminus {\mcC}_i)$ ($i=1, 3$).
Because we get that both fundamental groups are free abelian on $3$ generators, we conclude that ${\mcC}_1$ and ${\mcC}_3$ are $\pi_1$-equivalent, which finishes the proof of Theorem \ref{thm:main}.

Figures \ref{curveC1} and \ref{curveC3} depict  projective transformations of curves ${\mcC}_1$ and ${\mcC}_3$. Each curve has four types of singularities: nodes and tangency points (between the line and a conic, or between two conics), branch points of the conics, and intersection points of the three conics together.

To compute the fundamental group $ \pcpt{\mathcal{C}_i} $ for a curve $ \mathcal{C}_i $ we use the Zariski-Van Kampen algorithm as described in \cite{vanKampen33} which, for the sake of completeness, we describe here.
The work of Cogolludo about monodromy and fundamental groups \cite{Cog1} is a great basis for understanding our explanations, and later on, understanding the computations as well.

We begin by computing the affine fundamental group $ \pi_1(\mbb{C}^2 \setminus \mathcal{C}_i, *) $; that is, we pick a generic line $ L\subseteq \cpt $ and choose coordinates such that $ L $ is the line at infinity.
We then consider the projection $ \pr: \mbb{C}^2 \to\mbb{C}^1 $ given by $ (x,y)\mapsto x $.
The genericity conditions ensure that no tangent line to $ \mathcal{C}_i $ at a singularity can be parallel to the $ y $-axis.
Let $ q_1,\dots, q_N\in \mathbb{C}^1 $ be the branch locus of $ \pr|_{\mathcal{C}_i} $, that is, the images of the singularities of $ \mathcal{C}_i $ and the images of points of $ \mathcal{C}_i $ where the tangent to $ \mathcal{C}_i $ is parallel to the $ y $-axis (the latter points are called \emph{branch points}).
Pick a base point $ y_0\in \mathbb{C}^1\setminus \{q_1,\dots,q_N\} $ and a base point $ x_0\in \pr^{-1}(y_0)\setminus \mathcal{C}_i $ in its fiber.
One can be convinced that any loop in $ \pi_1(\CC^2 \setminus \mathcal{C}_i, x_0 ) $ is equivalent to a loop whose image under $ \pr $ avoids points $ q_1,\dots, q_N $.
Covering $ \CC^1-\{q_1,\dots,q_N\} $ by simply-connected open neighborhoods of $ y_0 $ and using the Van Kampen theorem, we see that any loop in $ \pi_1(\CC^2\setminus \mathcal{C}_i, x_0) $ is in fact equivalent to a loop that is entirely contained in the fiber $ \pr^{-1}(y_0) \setminus \mathcal{C}_i $.
So, the fundamental group $ \pi_1(\CC^2\setminus \mathcal{C}_i, x_0) $ is a quotient of $ \pi_1(\pr^{-1}(y_0)\setminus \mathcal{C}_i, x_0) $,  which is isomorphic to the free group on $ \deg \mathcal{C}_i $ generators.
To find the relations that define $ \pcpt{\mathcal{C}_i} $, we consider the monodromy action of $ \pi_1(\CC^1 \setminus \{ q_1, \dots, q_N \}, y_0) $ on $ \pi_1(\pr^{-1}(y_0)\setminus \mathcal{C}_i, x_0) $.
For every element $ [\gamma]\in \pi_1(\CC^1 \setminus \{q_1,\dots, q_N\}, y_0) $ and $ [\Gamma]\in \pi_1(\pr^{-1}(y_0) \setminus \mathcal{C}_i, x_0) $, if we denote by $ [\gamma]\cdot[\Gamma]\in\pi_1(\pr^{-1}(y_0)\setminus \mathcal{C}_i, x_0)$, the result of the monodromy action of $ [\gamma] $ on $ [\Gamma] $, then this action induces homotopy (in $ \CC^2 \setminus \mathcal{C}_i $) between $ [\gamma]\cdot[\Gamma] $ and $ [\Gamma] $ so they are equal in $ \pi_1(\CC^2\setminus \mathcal{C}_i, x_0) $.
In fact, those are all the relations in $ \pi_1(\CC^2 \setminus \mathcal{C}_i, x_0) $.
To get a representation of $ \pcpt{\mathcal{C}_i} $, one can use the Van Kampen theorem again, which gives one additional relation, called a \emph{projective relation}. This projective relation corresponds to the fact that a loop around all the points in $ \mathcal{C}_i\cap \text{pr}^{-1}(y_0) $ is null-homotopic in $ \mathbb{CP}^2 \setminus \mathcal{C}_i $.
The projective relation will always be the product of all the generators in the order they appear in the fiber $\pr^{-1}(y_0)$.

Obviously, it is enough to consider the relations arising from a generating set of the group $ \pi_1(\mathbb{C}^1 \setminus \{q_1,\dots,q_N\}, y_0) $.
In all our cases we are able to pick coordinates such that $ q_1,\dots,q_N $ are all real. We then pick $y_0$ to be real and larger than $ \max\{ q_1, \dots, q_N \} $.
We choose generating set $ [\gamma_1],\dots,[\gamma_N] $ of $ \pi_1(\mathbb{C}^1 \setminus \{q_1,\dots,q_N\}, y_0) $, such that $ \gamma_i $ is a loop that goes in the upper half plane to $ q_i $, then performs a counter clockwise twist around $ q_i $, and finally returns to $ y_0 $ in the upper half plane.
The calculation of the monodromy action is then separated into a local calculation around $ q_i $ and a conjugation (referred to as a \emph{diffeomorphism}) corresponding to replacing the basepoint from some point that lies close to $ q_i $, to the point $ y_0 $.
The data of the local relations and diffeomorphisms corresponding to all the singularities $ q_i $ is represented in a \emph{monodromy table},  see for example Table \ref{table:monodromy_table_C1_example}.

Given a branch point, a node, or a cusp, we define a skeleton $\langle i , i+1 \rangle $ to be a vertical line segment connecting points $i$ and $i+1$ that are positioned on the two components that meet at the singularity. To understand it more clearly, we look at the left side of Figure \ref{fig:skeleton_diffeomorphism_example}; we can see a fiber with seven points (that are the intersections of this fiber with a curve of degree $7$), and the skeleton $ \langle 6,7 \rangle $ that connects the points on the fiber, numerated as $6$ and $7$.

Given an intersection point of three components (e.g., intersection point of three conics) we have a skeleton of the form $ \langle i , i+1 , i+2 \rangle $.

\begin{table}[ht]
	\begin{center}		
		\begin{tabular}{| c | c | c | c |}
			\hline \hline
			Vertex number& Vertex description& Skeleton& Diffeomorphism\\ \hline \hline
			1& $C_1$ branch& $ \langle 1 - 2 \rangle $& $ \Delta_{I_{4}I_{6}}^{1/2}\langle 1 \rangle $ \\ \hline
			2& $C_2$ branch& $ \langle 2 - 3 \rangle $& $ \Delta_{I_{2}I_{4}}^{1/2}\langle 2 \rangle $ \\ \hline
			3& $C_3$ branch& $ \langle 3 - 4 \rangle $& $ \Delta_{\mathbb{R}I_{2}}^{1/2}\langle 3 \rangle $ \\ \hline
			4& Node between $C_1$, $C_2$ and $C_3$& $ \langle 4 - 5 - 6 \rangle $& $ \Delta \langle 4,5,6 \rangle $ \\ \hline
			5& Tangency between $C_3$ and $C_2$& $ \langle 2 - 3 \rangle $& $ \Delta^{2}\langle 2, 3 \rangle $ \\ \hline
			6& Node between $L_1$ and $C_3$& $ \langle 6 - 7 \rangle $& $ \Delta\langle 6, 7 \rangle $ \\ \hline
			7& Tangency between $C_2$ and $L_1$& $ \langle 5 - 6 \rangle $& $ \Delta^{2}\langle 5, 6 \rangle $ \\ \hline
			8& Node between $C_2$ and $C_1$& $ \langle 4 - 5 \rangle $& $ \Delta\langle 4, 5 \rangle $ \\ \hline
			9& Tangency between $L_1$ and $C_1$& $ \langle 5 - 6 \rangle $& $ \Delta^{2}\langle 5, 6 \rangle $ \\ \hline
			10& Node between $C_3$ and $L_1$& $ \langle 6 - 7 \rangle $& $ \Delta\langle 6, 7 \rangle $ \\ \hline
			11& Node between $C_3$, $C_1$ and $C_2$& $ \langle 4 - 5 - 6 \rangle $& $ \Delta \langle 4,5,6 \rangle $ \\ \hline
			12& Tangency between $C_3$ and $C_1$& $ \langle 4 - 5 \rangle $& $ \Delta^{2}\langle 4, 5 \rangle $ \\ \hline
			13& $C_3$ branch& $ \langle 3 - 4 \rangle $& $ \Delta_{I_{2}\mathbb{R}}^{1/2}\langle 3 \rangle $ \\ \hline
			14& Node between $C_1$ and $C_2$& $ \langle 2 - 3 \rangle $& $ \Delta\langle 2, 2 \rangle $ \\ \hline
			15& $C_1$ branch& $ \langle 1 - 2 \rangle $& $ \Delta_{I_{4}I_{2}}^{1/2}\langle 1 \rangle $ \\ \hline
			16& $C_2$ branch& $ \langle 1 - 2 \rangle $& $ \Delta_{I_{6}I_{4}}^{1/2}\langle -1 \rangle $
			\\ \hline \hline
		\end{tabular}
	\end{center}
	\caption{Monodromy table of $\mcC_1$}\label{table:monodromy_table_C1_example}
\end{table}

\begin{figure}[H]
	\begin{center}
		\begin{tikzpicture}
			\begin{scope}[scale=0.7 , xshift=-200]
				\draw [] (5.000000, 0.000000) -- (6.000000, 0.000000) ;
				\node [draw, circle, color=black, fill=white] (vert_0) at (0.000000,0.000000) {$  $};
				\node [draw, circle, color=black, fill=white] (vert_1) at (1.000000,0.000000) {$  $};
				\node [draw, circle, color=black, fill=white] (vert_2) at (2.000000,0.000000) {$  $};
				\node [draw, circle, color=black, fill=white] (vert_3) at (3.000000,0.000000) {$  $};
				\node [draw, circle, color=black, fill=white] (vert_4) at (4.000000,0.000000) {$  $};
				\node [draw, circle, color=black, fill=white] (vert_5) at (5.000000,0.000000) {$  $};
				\node [draw, circle, color=black, fill=white] (vert_6) at (6.000000,0.000000) {$  $};
				
			\end{scope}[]
			\begin{scope}[scale=0.7 , xshift=0]
				\draw [->] (0.000000, 0.000000) -- node [above, midway] {$ \Delta^2\langle 5, 6 \rangle $}  (4.000000, 0.000000) ;
				
			\end{scope}[]

		\begin{scope}[scale=0.7 , xshift=150]
			\draw [] (5.000000, 0.000000) arc (0.000000:-180.000000:0.750000);
			\draw [] (3.500000, 0.000000) arc (180.000000:0.000000:1.250000);
			\node [draw, circle, color=black, fill=white] (vert_0) at (0.000000,0.000000) {$  $};
			\node [draw, circle, color=black, fill=white] (vert_1) at (1.000000,0.000000) {$  $};
			\node [draw, circle, color=black, fill=white] (vert_2) at (2.000000,0.000000) {$  $};
			\node [draw, circle, color=black, fill=white] (vert_3) at (3.000000,0.000000) {$  $};
			\node [draw, circle, color=black, fill=white] (vert_4) at (4.000000,0.000000) {$  $};
			\node [draw, circle, color=black, fill=white] (vert_5) at (5.000000,0.000000) {$  $};
			\node [draw, circle, color=black, fill=white] (vert_6) at (6.000000,0.000000) {$  $};
			\end{scope}
		\end{tikzpicture}
	\end{center}
\caption{Example of a skeleton and an action of a diffeomorphism on it.}\label{fig:skeleton_diffeomorphism_example}
\end{figure}
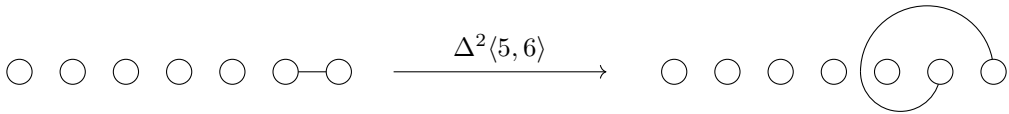

A monodromy table as in Table \ref{table:monodromy_table_C1_example} consists of a row for every singularity (including branch points of the projection to the $x$-axis), ordered by decreasing $x$-coordinate.
In each such row we indicate the singularity number, its description, the associated skeleton as described above, and an associated diffeomorphism that acts on the braids as we pass from a fiber to the left of the singularity to the fiber (of the vertical projection) to the right of the singularity.
Each  diffeomorphism can be one of the following possibilities:

\begin{itemize}
\item For a branch point, the diffeomorphism is $ \Delta^{1/2}\langle i \rangle $ and it changes the points $ i $ and $ i+1 $ from real to imaginary, or vice versa, where the exact action is indicated by a subscript.
\item For a node, the diffeomorphism is $ \Delta\langle i,i+1 \rangle $ and it is a counter-clockwise half-twist of the points $ i $  and $i+1$.
\item For a tangency, the diffeomorphism is $ \Delta^2\langle i,i+1 \rangle $ and it is a counter-clockwise full-twist of the points $ i $  and $i+1$. As an example we can look at the right side of Figure \ref{fig:skeleton_diffeomorphism_example}, in which a diffeomorphism $ \Delta^2\langle 5, 6 \rangle $ is acting on the skeleton $ \langle 6,7 \rangle $.
\item For an intersection point of three components, the diffeomorphism is $ \Delta\langle  i, i+1, i+2 \rangle $, and it is a counter-clockwise half-twist of the points $i, i+1$, and $i+2$.
\end{itemize}

To get the appropriate braid for a certain singularity, we will take its skeleton and act on it with all the diffeomorphisms that correspond to the singularities before it, in reverse order, one after the other, until the diffeomorphism of the first point is activated.
This will produce a braid in the rightmost fiber.
This braid should be understood as describing the action of $[\gamma]\in \pi_1(\mathbb{C}^1 \setminus \{q_1, \dots, q_N\} )$ on $\pi_1(\mathbb{C}^2 \setminus \mathcal{C}_i)$ by moving the endpoints of the braid along it in a way that depends on the singularity type.
The resulting relation is:
\begin{enumerate}
	\item For a branch point in a conic (say conic $C_1$), the relation is $\alpha = \alpha'$.
	\item For a node of a line and a conic (say $L_1$ and $C_3$), the relation is  $[\delta_1,\gamma]=\delta_1\gamma\delta_1^{-1}\gamma^{-1}=e$.
	\item For a tangency point between a line and a conic (say $L_1$ and $C_1$), the relation is \\ $\{\delta_1,\alpha\}=\delta_1\alpha\delta_1\alpha\delta_1^{-1}\alpha^{-1}\delta_1^{-1}\alpha^{-1}=e$.
	\item For an intersection of three components that belong to $C_1, C_2, C_3$, the relation is $\alpha\beta\gamma=\gamma\alpha\beta=\beta\gamma\alpha$.
	\item The projective relation is a product of all generators in the group (i.e., $\alpha,\alpha',\beta,\beta',\gamma,\gamma',\delta_1$), according to the order they appear on the typical fiber.
\end{enumerate}

The generators of the fundamental group will be taken from the following notation:
\begin{notation}\label{not}
Given the conic-line arrangements ${\mcC}_i$ ($i=1, 3$), we construct the generators of the fundamental groups $\pi_1(\mathbb{P}^2 \setminus {\mcC}_i)$ as follows: $\alpha$ and $\alpha'$ are the two loops coming from a general point on the typical fiber, circling the two components of the conic $C_1$ and returning back to the general point. In the same way, we construct $\beta$ and $\beta'$ that correspond to the conic $C_2$ and the generators $\gamma$ and $\gamma'$ that correspond to the conic $C_3$. The  generator $\delta_i$ corresponds to line $L_i$ in ${\mcC}_i$ ($i=1, 3$).
\end{notation}

In the following subsections we compute and determine two fundamental groups $\pi_1(\mathbb{P}^2 \setminus {\mcC}_i)$ ($i=1, 3$). Due to the reason that we apply the diffeomorphisms on the braids,  the relations in the groups appear with conjugations of the generators of the groups.

\subsection{Curve \texorpdfstring{${\mcC}_1$}{C1} and related fundamental group}\label{sec:C1}

\begin{prop}
For the curve ${\mcC}_1$ as defined in the introduction (see Figure \ref{curveC1}),
the fundamental group $\pi_1(\mathbb{P}^2 \setminus {\mcC}_1)$ is free abelian with three generators.

\begin{figure}[H]
\begin{center}
\begin{scaletikzpicturetowidth}{\textwidth}
\begin{tikzpicture}
\begin{scope}[scale = 0.25]
	\draw [rotate around={90.000000:(-0.000000,-0.000000)}, line width=1pt] (-0.000000,-0.000000) ellipse (10.000000 and 10.000000);
	\node  (C_2_naming_node) at (5.000000,10.000000) {$ C_2 $};
	\draw [rotate around={0.000000:(-0.000000,-1.000000)}, line width=1pt] (-0.000000,-1.000000) ellipse (8.000000 and 11.000000);
	\node  (C_3_naming_node) at (7.000000,-1.000000) {$ C_3 $};
	\draw [rotate around={160.238450:(3.959925,8.085314)}, line width=1pt] (3.959925,8.085314) ellipse (10.624951 and 20.112750);
	\node  (C_1_naming_node) at (-7.509300,11.677678) {$ C_1 $};
	
	\draw [line width=1pt] (-10.5, -3) -- (-0.5, -15) node [near end, below] {$L_1$};
	

\end{scope}
\end{tikzpicture}
\end{scaletikzpicturetowidth}
\caption{The curve $\mcC_1$}\label{curveC1}
\end{center}
\end{figure}
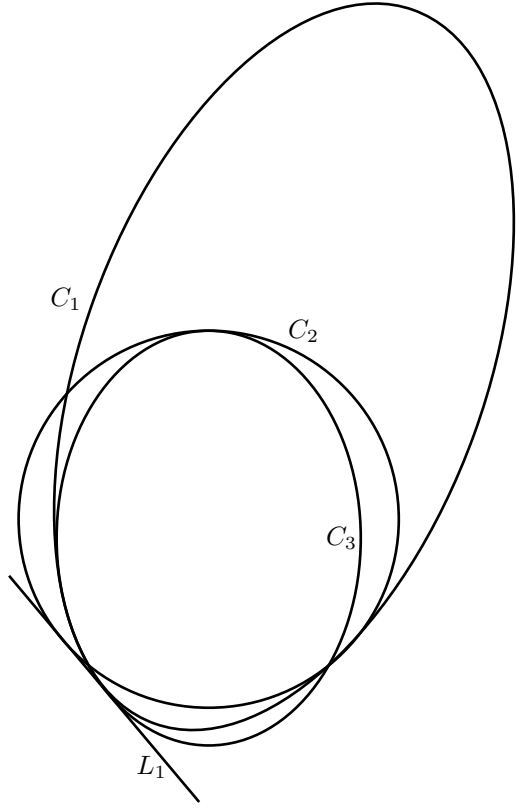

\end{prop}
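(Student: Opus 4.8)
The plan is to run the Zariski--Van Kampen algorithm exactly as set up above, read off the relations encoded in the monodromy table (Table~\ref{table:monodromy_table_C1_example}), and then show by purely group-theoretic manipulation that the resulting presentation defines an abelian group; since the abelianization is forced to be $\ZZ^3$, this finishes the proof. Concretely, I would first record the target. The arrangement $\mcC_1=C_1+C_2+C_3+L_1$ has four irreducible components of degrees $2,2,2,1$, so $H_1(\PP^2\setminus\mcC_1)\cong\ZZ^4/\langle 2\mu_1+2\mu_2+2\mu_3+\mu_4\rangle$, where $\mu_i$ is the meridian class of each component. As $\mu_4$ (the meridian of the line $L_1$) occurs with coefficient $1$, it can be eliminated, giving $H_1(\PP^2\setminus\mcC_1)\cong\ZZ^3$. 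Hence it suffices to prove that $\pi_1(\PP^2\setminus\mcC_1)$ is \emph{abelian}: a group equal to its own abelianization $\ZZ^3$ is then $\ZZ^3$.

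Next I would carry out the braid-monodromy bookkeeping. The seven generators are $\alpha,\alpha',\beta,\beta',\gamma,\gamma',\delta_1$ (matching $\deg\mcC_1=7$, one for each of the seven points of a generic fiber), and each of the sixteen vertices of Table~\ref{table:monodromy_table_C1_example} contributes a relation obtained by transporting its skeleton to the rightmost fiber through the composition of all preceding diffeomorphisms, taken in reverse order. The six branch vertices ($1,2,3,13,15,16$) identify $\alpha'$ with $\alpha$, $\beta'$ with $\beta$, and $\gamma'$ with $\gamma$ up to conjugation; the node vertices between $C_1$ and $C_2$ ($8,14$) and between $L_1$ and $C_3$ ($6,10$) give commutators $[\alpha,\beta]=e$ and $[\delta_1,\gamma]=e$; the two triple-point vertices ($4,11$) give the cyclic relation $\alpha\beta\gamma=\beta\gamma\alpha=\gamma\alpha\beta$; and the four tangency vertices ($5,7,9,12$) give double-twist relations of the shape $(\gamma\beta)^2=(\beta\gamma)^2$, $(\delta_1\beta)^2=(\beta\delta_1)^2$, $(\delta_1\alpha)^2=(\alpha\delta_1)^2$, $(\gamma\alpha)^2=(\alpha\gamma)^2$. \textbf{The main obstacle of the argument lives precisely here:} the diffeomorphisms attach conjugating words to each generator, and the relations are only as clean as written after those conjugations are tracked correctly; this delicate, largely mechanical step is the heart of the computation, and it is also where one must confirm that the triple-point relation really appears in the unconjugated cyclic form used below.

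Granting the relations in the form above, I would then deduce commutativity. From $[\alpha,\beta]=e$ and the triple-point relation, substituting $\alpha\beta=\beta\alpha$ into $\alpha\beta\gamma=\beta\gamma\alpha$ yields $[\alpha,\gamma]=e$, and feeding this into $\alpha\beta\gamma=\gamma\alpha\beta$ yields $[\beta,\gamma]=e$; thus the three conic meridians $\alpha,\beta,\gamma$ pairwise commute. Because each branch relation has the form $\alpha'=w\alpha w^{-1}$ with $w$ a word in the conic meridians, the now-established commutativity collapses these to genuine equalities $\alpha'=\alpha$, $\beta'=\beta$, $\gamma'=\gamma$. It remains to control $\delta_1$, which is the point where I would invoke the projective relation: since $L_1$ has degree $1$, the generator $\delta_1$ occurs exactly once in the ordered product of the seven fiber generators, so this relation can be solved for $\delta_1$, expressing it as a word in $\alpha,\beta,\gamma$. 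As $\alpha,\beta,\gamma$ commute with one another, $\delta_1$ then commutes with each of them, and the two line-conic tangency relations $(\delta_1\alpha)^2=(\alpha\delta_1)^2$ and $(\delta_1\beta)^2=(\beta\delta_1)^2$ (as well as $[\delta_1,\gamma]=e$) hold automatically, confirming consistency.

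Putting these together, $\pi_1(\PP^2\setminus\mcC_1)$ is generated by the three pairwise-commuting elements $\alpha,\beta,\gamma$, hence is abelian and a quotient of $\ZZ^3$; comparing with the abelianization $H_1(\PP^2\setminus\mcC_1)\cong\ZZ^3$ computed at the outset forces $\pi_1(\PP^2\setminus\mcC_1)\cong\ZZ^3$, which is the asserted free abelian group on three generators. I expect that the only genuinely error-prone part is the second paragraph, namely verifying the exact (conjugated) shape of each relation coming out of the monodromy table; once those are in hand, the algebraic collapse to $\ZZ^3$ is short and forced.
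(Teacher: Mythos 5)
Your overall framing is sound: reducing to abelianness via $H_1(\PP^2\setminus\mcC_1)\cong\ZZ^3$ is correct, and your conditional algebra (commutativity of $\alpha,\beta,\gamma$ from $[\alpha,\beta]=e$ plus the cyclic triple-point relation, then elimination of $\delta_1$ by the projective relation) is valid \emph{if} the relations really had the clean form you write down. But there is a genuine gap, and you have located it yourself: the step you dispose of with ``granting the relations in the form above'' is not a mechanical verification to be deferred --- it is the entire content of the proof. The paper's computation shows that the relations do \emph{not} come out in your clean form. For instance, the relation coming from the $C_1$--$C_2$ node is $[\delta_1^{-1}\alpha'\delta_1,\beta']=e$ (conjugated by $\delta_1$), not $[\alpha',\beta']=e$; the relation from the $C_3$ branch identifies $\gamma'$ with $\beta'\alpha'\gamma\alpha'^{-1}\beta'^{-1}$, not with $\gamma$; and several tangency/node relations involve long conjugating words such as $\beta'^{-1}\gamma'^{-1}\beta'\alpha'^{-1}\beta'^{-1}\gamma'^{-1}\delta_1^{-1}\beta\gamma\beta\gamma^{-1}\beta^{-1}\delta_1\gamma'\beta'\alpha'\beta'^{-1}\gamma'\beta'$. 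Your plan assumes away exactly these conjugations, so it assumes a presentation strictly stronger than the one the monodromy actually produces.

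The consequence is that your deduction runs in an order that is unavailable. You start from $[\alpha,\beta]=e$ as given data; in the actual presentation this relation is not present, and it is the \emph{last} commutator to be extracted, not the first. The paper's simplification must proceed: first derive the ``product'' commutators $[\gamma,\alpha\beta]=e$, $[\alpha,\beta\gamma]=e$, $[\beta,\gamma\alpha]=e$ from the conjugated relations; then use the projective relation to eliminate $\delta_1=(\alpha\beta\gamma)^{-2}$, noting that $\alpha$ commutes with $\delta_1$ precisely because $[\alpha,\beta\gamma]=e$; only then does the conjugated node relation $[\delta_1\alpha\delta_1^{-1},\beta]=e$ collapse to $[\alpha,\beta]=e$, after which $[\beta,\gamma]=e$ and $[\alpha,\gamma]=e$ follow. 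Since your argument's starting point is a relation that only appears as an output of this order-sensitive chain, the proof as written is circular relative to the actual monodromy data; to repair it you would have to carry out the braid transport through the sixteen diffeomorphisms and run the simplification in the feasible order, which is precisely what the paper does.
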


\begin{proof}
The group $\pi_1(\mathbb{P}^2 \setminus {\mcC}_1)$ has seven generators, which are given in Notation \ref{not}. We recall them here as follows: generators $\alpha, \alpha'$ correspond to conic $C_1$; generators $\beta, \beta'$ correspond to conic $C_2$; generators $\gamma, \gamma'$ correspond to conic $C_3$; and  generator $\delta_1$ corresponds to line $L_1$. The braid monodromy algorithm provides us the braids related to the singularities in ${\mcC}_1$. Then, by the Van Kampen theorem on those braids, we get a presentation for the group with the above generators and the following list of relations:
\begin{equation}\label{7-1}
\begin{split}
\alpha  = \alpha',
\end{split}
\end{equation}
\begin{equation}\label{7-2}
\begin{split}
\alpha'\beta\alpha'^{-1}  = \beta',
\end{split}
\end{equation}
\begin{equation}\label{7-3}
\begin{split}
\beta'\alpha'\gamma\alpha'^{-1}\beta'^{-1}  = \gamma',
\end{split}
\end{equation}
\begin{equation}\label{7-4}
\begin{split}
\left[\alpha', \beta'\gamma'  \right]=e,
\end{split}
\end{equation}
\begin{equation}\label{7-5}
\begin{split}
\left[\beta', \gamma'\alpha'  \right]=e,
\end{split}
\end{equation}
\begin{equation}\label{7-6}
\begin{split}
\left\{\beta , \gamma  \right\}=e,
\end{split}
\end{equation}
\begin{equation}\label{7-7}
\begin{split}
\left[\gamma', \delta_1  \right]=e,
\end{split}
\end{equation}
\begin{equation}\label{7-8}
\begin{split}
\left\{\beta', \delta_1  \right\}=e,
\end{split}
\end{equation}
\begin{equation}\label{7-9}
\begin{split}
\left[\delta_1^{-1}\alpha'\delta_1 , \beta'  \right]=e,
\end{split}
\end{equation}
\begin{equation}\label{7-10}
\begin{split}
\left\{\alpha', \delta_1  \right\}=e,
\end{split}
\end{equation}
\begin{equation}\label{7-11}
\begin{split}
\left[\beta'\alpha'\delta_1\alpha'^{-1}\beta'^{-1} , \gamma'  \right]=e,
\end{split}
\end{equation}
\begin{equation}\label{7-12}
\begin{split}
\left[\beta'^{-1}\gamma'\beta', \alpha'\beta'  \right]=e,
\end{split}
\end{equation}
\begin{equation}\label{7-13}
\begin{split}
\left[\alpha', \gamma'\beta'  \right]=e,
\end{split}
\end{equation}
\begin{equation}\label{7-14}
\begin{split}
\left\{\beta'^{-1}\gamma'\beta', \alpha'  \right\}=e,
\end{split}
\end{equation}
\begin{equation}\label{7-15}
\begin{split}
\beta'\alpha'^{-1}\beta'^{-1}\gamma'^{-1}\delta_1^{-1}\beta\gamma\beta^{-1}\delta_1\gamma'\beta'\alpha'\beta'^{-1}  = \gamma',
\end{split}
\end{equation}
\begin{equation}\label{7-16}
\begin{split}
\left[\beta'^{-1}\gamma'^{-1}\beta'\alpha'^{-1}\beta'^{-1}\gamma'^{-1}\delta_1^{-1}\beta\gamma\beta\gamma^{-1}\beta^{-1}\delta_1\gamma'\beta'\alpha'\beta'^{-1}\gamma'\beta', \alpha'  \right]=e,
\end{split}
\end{equation}
\begin{equation}\label{7-17}
\begin{split}
\beta'^{-1}\gamma'^{-1}\beta'\alpha'^{-1}\beta'^{-1}\gamma'^{-1}\delta_1^{-1}\beta\gamma\beta^{-1}\gamma^{-1}\beta^{-1}\alpha\beta\gamma\beta\gamma^{-1}\beta^{-1}\delta_1\gamma'\beta'\alpha'\beta'^{-1}\gamma'\beta'  = \alpha',
\end{split}
\end{equation}
\begin{equation}\label{7-18}
\begin{split}
\gamma'\beta'\alpha'\beta'^{-1}\gamma'^{-1}\beta'\alpha'^{-1}\beta'^{-1}\gamma'^{-1}\delta_1^{-1}\beta\gamma\beta\gamma^{-1}\beta^{-1}\delta_1\gamma'\beta'\alpha'\beta'^{-1}\gamma'\beta'\alpha'^{-1}\beta'^{-1}\gamma'^{-1}  = \beta',
\end{split}
\end{equation}
\begin{equation}\label{7-19}
\begin{split}
\delta_1\gamma'\beta'\alpha'\alpha\beta\gamma =e.
\end{split}
\end{equation}

Because we have \eqref{7-1} and \eqref{7-2}, we simplify \eqref{7-3} to be $\gamma'=\alpha\beta\gamma\beta^{-1} \alpha^{-1}$.
We substitute this expression together with \eqref{7-1} and \eqref{7-2} in \eqref{7-12} and get $\left[\gamma, \alpha\beta \right]=e$. Therefore, $\gamma'=\alpha\beta\gamma\beta^{-1} \alpha^{-1}$ becomes $\gamma'=\gamma$. Now the substitutions are much easier. For example, using the above simplifications in \eqref{7-13} gives us $\left[\alpha, \beta\gamma \right]=e$.
By both $\left[\gamma, \alpha\beta \right]=e$ and $\left[\alpha, \beta\gamma \right]=e$, we deduce that $\left[\beta, \gamma\alpha \right]=e$.

The above simplified relations simplify the presentation further, and \eqref{7-11} and \eqref{7-16} are now redundant. We have a simplified presentation:

\begin{equation}\label{7-2a}
\begin{split}
\alpha\beta\alpha^{-1}  = \beta',
\end{split}
\end{equation}
\begin{equation}\label{7-4a}
\begin{split}
\left[\alpha ,\alpha\beta\alpha^{-1}\gamma  \right]=e,
\end{split}
\end{equation}
\begin{equation}\label{7-5a}
\begin{split}
\left[\alpha\beta\alpha^{-1} , \gamma\alpha  \right]=e,
\end{split}
\end{equation}
\begin{equation}\label{7-6a}
\begin{split}
\left\{\beta , \gamma  \right\}=e,
\end{split}
\end{equation}
\begin{equation}\label{7-7a}
\begin{split}
\left[\gamma , \delta_1  \right]=e,
\end{split}
\end{equation}
\begin{equation}\label{7-8a}
\begin{split}
\left\{\alpha\beta\alpha^{-1} , \delta_1  \right\}=e,
\end{split}
\end{equation}
\begin{equation}\label{7-9a}
\begin{split}
\left[\delta_1\alpha\delta_1^{-1} , \beta  \right]=e,
\end{split}
\end{equation}
\begin{equation}\label{7-10a}
\begin{split}
\left\{\alpha , \delta_1  \right\}=e,
\end{split}
\end{equation}
\begin{equation}\label{7-12a}
\begin{split}
\left[\gamma, \alpha\beta  \right]=e,
\end{split}
\end{equation}
\begin{equation}\label{7-13a}
\begin{split}
\left[\alpha , \beta\gamma  \right]=e,
\end{split}
\end{equation}
\begin{equation}\label{7-13*a}
\begin{split}
\left[\beta , \gamma\alpha  \right]=e,
\end{split}
\end{equation}
\begin{equation}\label{7-14a}
\begin{split}
\left\{\alpha , \gamma  \right\}=e,
\end{split}
\end{equation}
\begin{equation}\label{7-15a}
\begin{split}
\beta^{-1}\alpha^{-1}\delta_1^{-1}\beta\gamma\beta^{-1}
\delta_1\alpha\beta  = \alpha^{-1}\gamma \alpha,
\end{split}
\end{equation}
\begin{equation}\label{7-17a}
\begin{split}
\gamma\alpha\gamma^{-1}=\delta_1\alpha\delta_1^{-1} ,
\end{split}
\end{equation}
\begin{equation}\label{7-18a}
\begin{split}
\gamma^{-1}\beta\gamma=\delta_1^{-1}\beta\delta_1 ,
\end{split}
\end{equation}
\begin{equation}\label{7-19a}
\begin{split}
\delta_1(\alpha\beta\gamma)^2=e .
\end{split}
\end{equation}

We use \eqref{7-19a} to eliminate generator $\delta_1$. In this elimination process, the relations  \eqref{7-4a}, \eqref{7-5a},\eqref{7-7a}, \eqref{7-10a}, and \eqref{7-15a}, become redundant.

Now, relation \eqref{7-9a} simplifies to $\left[\alpha , \beta \right]=e$, and this relation simplifies \eqref{7-2a} to $\beta'=\beta$. Moreover, relation \eqref{7-8a} becomes $\left[\beta , \gamma \right]=e$. We conclude easily that $\left[\alpha , \gamma \right]=e$ as well. All these commutations make \eqref{7-17a} and \eqref{7-18a} redundant.

Therefore, group $\pi_1(\mathbb{P}^2 \setminus {\mcC}_1)$ is free abelian and is generated by  generators $\alpha, \beta, \gamma$.

\end{proof}

\subsection{Curve \texorpdfstring{${\mcC}_3$}{C3} and related fundamental group}\label{sec:C3}

\begin{prop}
	For the curve ${\mcC}_3$ as defined in the introduction (see Figure \ref{curveC3}),
	the fundamental group $\pi_1(\mathbb{P}^2 \setminus {\mcC}_3)$ is free abelian with three generators.

\begin{figure}[H]
	\begin{center}
		\begin{scaletikzpicturetowidth}{\textwidth}
			\begin{tikzpicture}
				\begin{scope}[scale = 0.25]
					\draw [rotate around={90.000000:(-0.000000,-0.000000)}, line width=1pt] (-0.000000,-0.000000) ellipse (10.000000 and 10.000000);
					\node  (C_2_naming_node) at (5.000000,10.000000) {$ C_2 $};
					\draw [rotate around={0.000000:(-0.000000,-1.000000)}, line width=1pt] (-0.000000,-1.000000) ellipse (8.000000 and 11.000000);
					\node  (C_3_naming_node) at (7.000000,-1.000000) {$ C_3 $};
					\draw [rotate around={160.238450:(3.959925,8.085314)}, line width=1pt] (3.959925,8.085314) ellipse (10.624951 and 20.112750);
					\node  (C_1_naming_node) at (5,25) {$ C_1 $};
					
					\draw [line width=1pt] (-11, -0.5) -- (-1, 24.2) node [near end, left] {$L_3$};
					

				\end{scope}
			\end{tikzpicture}
		\end{scaletikzpicturetowidth}
		\caption{The curve $\mcC_3$}\label{curveC3}
	\end{center}
\end{figure}

\end{prop}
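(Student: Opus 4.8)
The plan is to compute $\pi_1(\mathbb{P}^2\setminus\mcC_3)$ by the same Zariski--Van Kampen procedure used for $\mcC_1$ in Subsection~\ref{sec:C1}. First I would fix the seven generators $\alpha,\alpha',\beta,\beta',\gamma,\gamma',\delta_3$ of Notation~\ref{not}, with $\delta_3$ the meridian of $L_3$, and choose a projective model as in Figure~\ref{curveC3} in which all branch points of the vertical projection and all singular points of $\mcC_3$ have real $x$-coordinate. Reading the singularities in order of decreasing $x$-coordinate produces a monodromy table analogous to Table~\ref{table:monodromy_table_C1_example}: three pairs of branch points for $C_1,C_2,C_3$, the four conic-conic nodes together with the two triple points where $C_1,C_2,C_3$ meet, the two conic-conic tangencies, and the singularities created by $L_3$ (its tangencies with $C_1$ and $C_2$ and its transverse nodes with $C_3$). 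Transporting each skeleton back through the preceding diffeomorphisms yields, via Van Kampen, a presentation built from the four relation types listed in Subsection~\ref{sec:C1} (branch $\alpha=\alpha'$, node commutator, tangency $\{\,,\,\}$, and triple-point $\alpha\beta\gamma=\gamma\alpha\beta=\beta\gamma\alpha$), together with the projective relation, the product of all seven meridians in the order they appear on the generic fiber, which begins with $\delta_3$.

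The core of the argument is then purely group-theoretic simplification. Using the three branch relations I would eliminate $\alpha',\beta',\gamma'$ as explicit conjugates of $\alpha,\beta,\gamma$; the relations coming from the two triple points and the conic-conic tangency should force the commutations $[\gamma,\alpha\beta]=[\alpha,\beta\gamma]=[\beta,\gamma\alpha]=e$ exactly as in the $\mcC_1$ case, and in particular collapse $\gamma'$ to $\gamma$. I would then use the projective relation to eliminate $\delta_3$, after which the relations carrying the geometry of $L_3$ are expected to reduce to a single commutator such as $[\alpha,\beta]=e$, which combined with the triple-point commutations makes the whole group abelian and identifies $\beta'=\beta$. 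Here one may use a shortcut: since $\mcC_3$ has irreducible components $C_1,C_2,C_3,L_3$ of degrees $(2,2,2,1)$, its abelianization is $H_1(\mathbb{P}^2\setminus\mcC_3)\cong\mathbb{Z}^4/\langle(2,2,2,1)\rangle\cong\mathbb{Z}^3$, the coefficient vector being primitive; hence once abelianness is established the group is automatically free abelian on the three classes $\alpha,\beta,\gamma$, completing the proof.

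The main obstacle is that one cannot simply transcribe the relations from the $\mcC_1$ computation. Because $(\mcC_1,\mcC_3)$ is a Zariski pair, the two curves are embedded differently; in particular the splitting type of $(C_3,L_3;\mcQ)$ is $(1,1)$ rather than the $(0,2)$ of $(C_3,L_1;\mcQ)$, which alters the position of $L_3$ relative to the other components and thus changes both the ordering of singularities and the conjugating braids in the rows of the monodromy table involving the line. The delicate point is therefore to recompute these braids honestly from the geometry of Figure~\ref{curveC3} and to verify that the resulting relations --- though they describe a genuinely different embedding --- still collapse to commutativity rather than imposing an essentially new constraint. This is precisely what makes the pair $\pi_1$-equivalent, so this step must be carried out in detail rather than inferred by analogy with the $\mcC_1$ case.
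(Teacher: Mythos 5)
Your proposal follows essentially the same route as the paper's proof: the Zariski--Van Kampen algorithm on the seven meridian generators of Notation~\ref{not}, a monodromy table recomputed honestly for the geometry of $L_3$ (whose relations indeed differ from the $\mcC_1$ case, e.g.\ the node of $C_1$ and $C_2$ contributes $[\alpha',\beta']=e$ directly), simplification of all relations to pairwise commutativity of $\alpha,\beta,\gamma$, and elimination of $\delta_3$ via the projective relation $\delta_3(\gamma\beta\alpha)^2=e$. The only cosmetic difference is your $H_1$ shortcut identifying the abelianization as $\mathbb{Z}^4/\langle(2,2,2,1)\rangle\cong\mathbb{Z}^3$, where the paper instead observes that the simplified presentation is exactly $\langle\alpha,\beta,\gamma\mid[\alpha,\beta]=[\alpha,\gamma]=[\beta,\gamma]=e\rangle$; both are valid.
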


\begin{proof}
The group $\pi_1(\mathbb{P}^2 \setminus {\mcC}_3)$ has seven generators, which are $\alpha, \alpha'$ (related to conic $C_1$),  $\beta, \beta'$ (related to $C_2$),  $\gamma, \gamma'$ (related to $C_3$),   and $\delta_3$ (related to line $L_3$). The group admits the following presentation:
\begin{equation}\label{7C2-1}
\begin{split}
\alpha  = \alpha',
\end{split}
\end{equation}
\begin{equation}\label{7C2-2}
\begin{split}
\alpha'\beta\alpha'^{-1}  = \beta',
\end{split}
\end{equation}
\begin{equation}\label{7C2-3}
\begin{split}
\beta'\alpha'\gamma\alpha'^{-1}\beta'^{-1}  = \gamma',
\end{split}
\end{equation}
\begin{equation}\label{7C2-4}
\begin{split}
\left[\alpha', \beta'\gamma'  \right]=e,
\end{split}
\end{equation}
\begin{equation}\label{7C2-5}
\begin{split}
\left[\beta', \gamma'\alpha'  \right]=e,
\end{split}
\end{equation}
\begin{equation}\label{7C2-6}
\begin{split}
\left\{\beta , \gamma  \right\}=e,
\end{split}
\end{equation}
\begin{equation}\label{7C2-7}
\begin{split}
\left[\alpha', \beta'  \right]=e,
\end{split}
\end{equation}
\begin{equation}\label{7C2-8}
\begin{split}
\left[\gamma', \delta_3  \right]=e,
\end{split}
\end{equation}
\begin{equation}\label{7C2-9}
\begin{split}
\left\{\beta'\alpha'\beta'^{-1} , \delta_3  \right\}=e,
\end{split}
\end{equation}
\begin{equation}\label{7C2-10}
\begin{split}
\left[\beta'\alpha'\beta'^{-1}\delta_3\beta'\alpha'^{-1}\beta'^{-1} , \gamma'  \right]=e,
\end{split}
\end{equation}
\begin{equation}\label{7C2-11}
\begin{split}
\left[\beta'^{-1}\gamma'\beta', \alpha'\beta'^{-1}\delta_3^{-1}\beta'\delta_3\beta'  \right]=e,
\end{split}
\end{equation}
\begin{equation}\label{7C2-12}
\begin{split}
\left[\alpha', \beta'^{-1}\delta_3^{-1}\beta'\delta_3\gamma'\beta'  \right]=e,
\end{split}
\end{equation}
\begin{equation}\label{7C2-13}
\begin{split}
\left\{\beta', \delta_3  \right\}=e,
\end{split}
\end{equation}
\begin{equation}\label{7C2-14}
\begin{split}
\left\{\beta'^{-1}\gamma'\beta', \alpha'  \right\}=e,
\end{split}
\end{equation}
\begin{equation}\label{7C2-15}
\begin{split}
\beta'\alpha'^{-1}\beta'^{-1}\gamma'^{-1}\delta_3^{-1}\beta\gamma\beta^{-1}\delta_3\gamma'\beta'\alpha'
\beta'^{-1}  = \gamma',
\end{split}
\end{equation}
\begin{equation}\label{7C2-16}
\begin{split}
\left[\beta'^{-1}\gamma'^{-1}\beta'\alpha'^{-1}\beta'^{-1}\gamma'^{-1}\delta_3^{-1}\beta\gamma\beta
\gamma^{-1}\beta^{-1}\delta_3\gamma'\beta'\alpha'\beta'^{-1}\gamma'\beta', \alpha'  \right]=e,
\end{split}
\end{equation}
\begin{equation}\label{7C2-17}
\begin{split}
\beta'^{-1}\gamma'^{-1}\beta'\alpha'^{-1}\beta'^{-1}\gamma'^{-1}\delta_3^{-1}\beta\gamma\beta^{-1}
\gamma^{-1}\beta^{-1}\alpha\beta\gamma\beta\gamma^{-1}\beta^{-1}\delta_3\gamma'\beta'\alpha'\beta'^{-1}
\gamma'\beta'  = \alpha',
\end{split}
\end{equation}
\begin{equation}\label{7C2-18}
\begin{split}
\gamma'\beta'\alpha'\beta'^{-1}\gamma'^{-1}\beta'\alpha'^{-1}\beta'^{-1}\gamma'^{-1}\delta_3^{-1}
\beta\gamma\beta\gamma^{-1}\beta^{-1}\delta_3\gamma'\beta'\alpha'\beta'^{-1}\gamma'\beta'\alpha'^{-1}
\beta'^{-1}\gamma'^{-1}  = \beta',
\end{split}
\end{equation}
\begin{equation}\label{7C2-19}
\begin{split}
\delta_3\gamma'\beta'\alpha'\alpha\beta\gamma =e.
\end{split}
\end{equation}

As \eqref{7C2-7} is $\left[\alpha , \beta  \right]=e$, we get in \eqref{7C2-2} that $\beta'=\beta$. The result $\beta'=\beta$, along with $\alpha'=\alpha$ (from \eqref{7C2-1}) and $\left[\alpha , \beta  \right]=e$, simplify \eqref{7C2-4} and  \eqref{7C2-5} to $\left[\alpha , \gamma  \right]=e$ and $\left[\beta , \gamma  \right]=e$ respectively. By these resulting commutations, \eqref{7C2-3} becomes $\gamma'=\gamma$. It is much easier now to simplify the presentation, having $\alpha'=\alpha$, $\beta'=\beta$, and $\gamma'=\gamma$. The group  now has generators $\alpha$, $\beta$, $\gamma$, and $\delta_3$ and admits the following relations:
\begin{equation}\label{7C2-1a}
\begin{split}
\left[\alpha , \beta  \right]=e,
\end{split}
\end{equation}
\begin{equation}\label{7C2-2a}
\begin{split}
\left[\alpha , \gamma  \right]=e,
\end{split}
\end{equation}
\begin{equation}\label{7C2-3a}
\begin{split}
\left[\beta , \gamma  \right]=e,
\end{split}
\end{equation}
\begin{equation}\label{7C2-4a}
\begin{split}
\left[\alpha , \delta_3  \right]=e,
\end{split}
\end{equation}
\begin{equation}\label{7C2-5a}
\begin{split}
\left[\beta , \delta_3  \right]=e,
\end{split}
\end{equation}
\begin{equation}\label{7C2-6a}
\begin{split}
\left[\gamma , \delta_3  \right]=e,
\end{split}
\end{equation}
\begin{equation}\label{7C2-7a}
\begin{split}
\delta_3(\gamma\beta\alpha)^2=e.
\end{split}
\end{equation}

We use \eqref{7C2-7a} to write $\delta_3 = (\gamma\beta\alpha)^{-2}$, and then we substitute it in \eqref{7C2-4a}, \eqref{7C2-5a}, and \eqref{7C2-6a}; these relations are redundant.
Therefore, group $\pi_1(\mathbb{P}^2 \setminus {\mcC}_3)$ is free abelian with generators $\alpha, \beta, \gamma$.
\end{proof}


\bibliographystyle{spmpsci}
\bibliography{references-degree7}

\end{document}